\newtheorem{theorem}{Theorem}[section]
\newtheorem{lemma}[theorem]{Lemma}
\newtheorem{proposition}[theorem]{Proposition}
\theoremstyle{definition}
\theoremstyle{remark}
\newtheorem{remark}[theorem]{Remark}
\numberwithin{equation}{section}
\begin{document}
\title[Sharp Gaussian upper bounds for Schr\"odinger heat kernel]
{Sharp Gaussian upper bounds for Schr\"odinger heat kernel on gradient shrinking Ricci solitons}
\author{Jia-Yong Wu}
\address{Department of Mathematics, Shanghai University, Shanghai 200444, China}
\email{wujiayong@shu.edu.cn}
\thanks{}
\subjclass[2010]{Primary 35K08; Secondary 53C21, 58J50.}
\dedicatory{}
\date{\today}

\keywords{gradient shrinking Ricci soliton, heat kernel, Schr\"odinger operator, Green's function,
eigenvalue.}
\begin{abstract}
On gradient shrinking Ricci solitons, we observe that the study of
Schr\"odinger heat kernel seems to be more natural than the classical
heat kernel. In this paper we derive sharp Gaussian upper bounds for
the Schr\"odinger heat kernel on complete gradient shrinking Ricci
solitons. As applications, we prove sharp upper bounds for the Green's
function of the Schr\"odinger operator. We also prove sharp lower bounds
for eigenvalues of the Schr\"odinger operator. These sharp cases are all
achieved at Euclidean Gaussian shrinking Ricci solitons.
\end{abstract}
\maketitle

\section{Introduction}
In this paper we will investigate Gaussian upper estimates for Schr\"odinger heat kernels
on complete gradient shrinking Ricci solitons and their applications. Let $(M,g)$ be an
$n$-dimensional complete Riemannian manifold and let $f$ be a smooth function on $(M,g)$.
The triple $(M, g, f)$ is called a complete \emph{gradient shrinking Ricci soliton}
(see \cite{[Ham]}) if
\begin{align}\label{Eq1}
\mathrm{Ric}+\mathrm{Hess}\,f=\tfrac 12g,
\end{align}
where $\text{Ric}$ is the Ricci curvature of $(M,g)$ and $\text{Hess}\,f$ is the Hessian of
$f$. The function $f$ is often called a potential for the gradient Ricci soliton. Gradient Ricci
solitons play an important role in the Ricci flow theory \cite{[Ham],[Cao]} and Perelman's
\cite{[Pe],[Pe2],[Pe3]} resolution of the Poincar\'e conjecture and the geometrization
conjecture.

In this paper, we will consider Schr\"odinger heat kernels of the operator
\[
L:=-\Delta+a\mathrm{R}
\]
on a gradient shrinking Ricci soliton, where $\Delta$, $\mathrm{R}$ and $a$
denote the Laplace operator, the scalar curvature of $(M,g)$ and a positive
constant, respectively. Let $h^R(x,y,t): M\times M\times\mathbb{R}^+\to \mathbb{R}$
be the \emph{Schr\"odinger fundamental solution} for the operator $L$. That is, for each $y\in M$,
$h^{\mathrm{R}}(x,y,t)=u(x,t)$ is a smooth solution to the Schr\"odinger-type heat equation
\[
(\partial_t+L) u=0
\]
with the initial condition $\lim_{t\to0}u(x,t)=\delta_y(x)$, where $\delta_y(x)$
is the delta function defined by
\[
\int_M\phi(x)\delta_y(x)dv=\phi(y)
\]
for any $\phi\in C_0^{\infty}(M)$. We say that a Schr\"odinger fundamental solution
$H^{\mathrm{R}}(x,y,t)$ for the operator $L$ is the \emph{Schr\"odinger heat kernel}
(also called the \emph{minimal positive fundamental solution}) if $H^{\mathrm{R}}(x,y,t)$
is positive and if for every positive fundamental solution $h^R(x,y,t)$ we have
$h^R(x,y,t)\ge H^R(x,y,t)$. Throughout this paper, we denote by $H^{\mathrm{R}}(x,y,t)$
the Schr\"odinger heat kernel for the operator $L$. We will see that, when the scalar
curvature is bounded from above by a constant, the Schr\"odinger heat kernel
$H^{\mathrm{R}}(x,y,t)$ always exists on $(M,g,f)$ (for the explanation,
see Section \ref{sec2}).

Several factors motivated us to study the Schr\"odinger operator $L$
instead of the classical Laplace operator $-\Delta$ on $(M,g,f)$. First, Perelman's
geometric operator $-\Delta+\frac 14\mathrm{R}$ \cite{[Pe]} is a Schr\"odinger operator
and was widely considered in the Ricci flow theory. Given that shrinking Ricci solitons
are the self-similar solutions to the Ricci flow \cite{[Ham]}, the Schr\"odinger
operator $L$ seems to be more natural compared with the Laplace operator for gradient
Ricci solitons. Second, the gradient shrinking Ricci soliton is related to the Yamabe
invariant \cite{[AIL],[Pe]} associated to the conformal Laplacian
$-\Delta+\frac{n-2}{4(n-1)}\mathrm{R}$, which is a special case of Schr\"odinger
operator. Third, for gradient shrinking Ricci solitons, Li and Wang \cite{[LiWa]}
proved a Sobolev inequality including a scalar curvature term, which inspired us to
consider the Schr\"odinger operator instead of the Laplace operator.

On a gradient shrinking Ricci soliton $(M,g,f)$, according to a nice observation
of Carrillo and Ni (Theorem 1.1 in \cite{[CaNi]}), by adding a constant to $f$, without loss
of generality, we may assume (see the explanation in Section \ref{sec2} or \cite{[LiWa]})
\begin{equation}\label{Eq2}
\mathrm{R}+|\nabla f|^2=f \quad\text{and}\quad \int_M (4\pi)^{-\frac n2}e^{-f} dv=e^{\mu},
\end{equation}
where $\mu=\mu(g,1)$ is the entropy
functional of Perelman \cite{[Pe]}. For the Ricci flow, Perelman's entropy functional
is time-dependent, but on a fixed gradient shrinking Ricci soliton it is constant and finite.

In this paper we mainly prove a Gaussian upper bound for the Schr\"odinger heat
kernel $H^{\mathrm{R}}(x,y,t)$, similar to the classical Gaussian heat kernel
estimate for Laplace operator on manifolds. This result will be useful for
understanding the geometry and topology of gradient shrinking Ricci solitons.
\begin{theorem}\label{Mainup2}
Let $(M,g, f)$ be an $n$-dimensional complete gradient shrinking Ricci soliton satisfying
\eqref{Eq1} and \eqref{Eq2} with scalar curvature $\mathrm{R}$ bounded from above by a
constant. For any $c>4$, there exists a constant $A=A(n, c)$ depending on $n$ and $c$
such that the Schr\"odinger heat kernel of the operator $-\Delta+a\mathrm{R}$ with
$a\geq \frac 14$ satisfies
\begin{equation}\label{low2}
H^{\mathrm{R}}(x,y,t)\le\frac{A e^{-\mu}}{(4\pi t)^{\frac n2}}\exp\left(-\frac{d^2(x,y)}{ct}\right)
\end{equation}
for all $x,y\in M$ and $t>0$, where $\mu$ is Perelman's entropy functional.
\end{theorem}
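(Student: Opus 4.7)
The plan is to combine the Li--Wang scalar-curvature Sobolev inequality on gradient shrinking Ricci solitons with a Davies-type perturbation argument. Under \eqref{Eq1} and \eqref{Eq2}, Li and Wang established a Sobolev inequality of the schematic form
\[
\left(\int_M u^{\frac{2n}{n-2}}\,dv\right)^{\frac{n-2}{n}} \leq K(n)\, e^{-\frac{2\mu}{n}}\int_M \left(|\nabla u|^2 + \tfrac{1}{4}\mathrm{R}\,u^2\right) dv,
\]
whose constant $e^{-2\mu/n}$ is sharp and attained at the Gaussian shrinking soliton. Because $a \geq 1/4$ and (by Chen's non-negativity result) $\mathrm{R} \geq 0$ on complete shrinkers, the Dirichlet form $\mathcal{E}(u,u) := \int_M (|\nabla u|^2 + a\mathrm{R}\,u^2)\,dv$ of $L = -\Delta + a\mathrm{R}$ dominates the right-hand side above, so the same Sobolev inequality holds with $\mathcal{E}(u,u)$ on the right, and inherits the sharp constant $e^{-2\mu/n}$.

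First I would run Nash's argument on $u(\cdot, t) = H^{\mathrm{R}}(\cdot, y, t)$. Since $\mathrm{R} \geq 0$, the semigroup $e^{-tL}$ is sub-Markovian, so $\|u(\cdot, t)\|_1 \leq 1$; combining $\frac{d}{dt}\|u\|_2^2 = -2\mathcal{E}(u, u)$ with the Sobolev inequality and the interpolation $\|u\|_2 \leq \|u\|_1^{\alpha}\|u\|_{2n/(n-2)}^{1-\alpha}$ produces a differential inequality of the form $\frac{d}{dt}\|u\|_2^2 \leq -C(n)\, e^{2\mu/n}\|u\|_2^{2+4/n}$. Integrating in $t$ yields $\|u(\cdot, t)\|_2^2 \leq A_0\, e^{-\mu}(4\pi t)^{-n/2}$, and then the semigroup identity $H^{\mathrm{R}}(x, x, 2t) = \|H^{\mathrm{R}}(\cdot, x, t)\|_2^2$ gives the on-diagonal bound $H^{\mathrm{R}}(x, x, t) \leq A_0\, e^{-\mu}(4\pi t)^{-n/2}$.

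Next, to extract the Gaussian off-diagonal factor, I would apply Davies' integrated maximum principle. Fix $y \in M$ and a smooth bounded function $\psi$ with $|\nabla \psi| \leq 1$ approximating $d(\cdot, y)$. For $\xi > 0$, the weighted $L^2$ quantity $I_\xi(t) := \int_M e^{2\xi \psi(x)} H^{\mathrm{R}}(x, y, t)^2\,dv(x)$ satisfies $I_\xi'(t) \leq 2\xi^2 I_\xi(t)$ after discarding the non-negative scalar-curvature term, hence $I_\xi(t) \leq e^{2\xi^2 t} I_\xi(0)$. Combining this with the on-diagonal bound via a Cauchy--Schwarz/semigroup splitting $t = t/2 + t/2$ and then optimizing $\xi$ produces the Gaussian factor $\exp(-d(x,y)^2/(ct))$ for any prescribed $c > 4$.

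The main obstacle is the sharpness in the non-compact regime: the approximation of $d(\cdot, y)$ by smooth bounded $\psi$, the cutoffs needed to run Nash iteration on a non-compact manifold, and the Cauchy--Schwarz split all introduce small losses that together force $c > 4$ rather than $c = 4$, and one must organize them so that the manifold-dependent constant remains exactly $e^{-\mu}$. A secondary concern is justifying the semigroup manipulations for $L$ with an a priori unbounded potential on a non-compact manifold, but the scalar curvature upper bound together with the minimal Schr\"odinger heat kernel framework of Section \ref{sec2} makes these operations rigorous.
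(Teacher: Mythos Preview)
Your Step~1 is a legitimate alternative to the paper's route. The paper uses the Li--Wang \emph{logarithmic} Sobolev inequality together with the Davies $L^{p(t)}$-flow (with $p(t)=T/(T-t)$) to obtain the sharp on-diagonal bound $H^{\mathrm R}(x,y,t)\le e^{-\mu}(4\pi t)^{-n/2}$ (Theorem~\ref{Mainup}); you instead propose Sobolev $+$ Nash, which yields an on-diagonal bound with a dimensional constant $A_0(n)$ in place of $1$. Since the theorem only asks for $A=A(n,c)$, that loss is harmless. The tradeoff is that the Davies flow exploits the Log-Sobolev inequality directly and produces the exact constant $e^{-\mu}$, whereas Nash pays an $A_0(n)$ but avoids the $p(t)$ computation.

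Your Step~2, however, has a real gap. The differential inequality $I_\xi'(t)\le 2\xi^2 I_\xi(t)$ for $I_\xi(t)=\int e^{2\xi\psi}H^{\mathrm R}(\cdot,y,t)^2\,dv$ is correct, but it only gives $I_\xi(t)\le e^{2\xi^2(t-s)}I_\xi(s)$ for $s>0$; you cannot start at $s=0$ since $I_\xi(0)=+\infty$. To close the argument you need a bound on the \emph{weighted} $L^2$-mass $I_\xi(s)$ (equivalently, on $E_D(s)=\int H^{\mathrm R}(x,z,s)^2 e^{d^2(x,z)/(Ds)}\,dv$), and the on-diagonal bound only controls the unweighted integral $\int H^{\mathrm R}(\cdot,y,s)^2\,dv=H^{\mathrm R}(y,y,2s)$. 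The Cauchy--Schwarz splitting at $t=t/2+t/2$ does not bridge this: after inserting $e^{\pm\xi\psi}$ one is left with precisely two weighted integrals of the form $I_\xi$, neither of which is controlled. Passing from an on-diagonal bound to a Gaussian is a genuinely nontrivial step; the paper handles it with Grigor'yan's iteration (Lemma~\ref{lem1} and Proposition~\ref{pro1}), which builds the weighted bound $E_D(t)\le 4e^{-\mu}(8\pi\delta t)^{-n/2}$ from the unweighted one by a dyadic decomposition in radius and time. The alternative Davies route would require you to establish ultracontractivity of the \emph{perturbed} semigroup $e^{-tL^\psi}$ (via Log-Sobolev for the shifted form $\mathcal{E}^\psi(u,u)=\mathcal{E}(u,u)-\xi^2\int|\nabla\psi|^2u^2$), which is considerably more than the $L^2\!\to\! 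L^2$ bound you wrote down. Either way, Step~2 as sketched is missing its key ingredient.
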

\begin{remark}\label{remex}
The upper assumption on scalar curvature only guarantees that the Schr\"odinger heat
kernel possesses similar propositions of the classical heat kernel of the Laplace
operator, such as existence, semigroup property, eigenfunction expansion, etc.
(see Section \ref{sec2}). It seems not to be directly used in our proof of Theorem
\ref{Mainup2}. It is interesting to ask if the Schr\"odinger heat kernel still
exists on complete noncompact gradient shrinking Ricci solitons when the scalar
curvature assumption is removed.
\end{remark}
\begin{remark}\label{remex2}
The classical heat kernel of the Laplace operator on an $n$-dimensional Euclidean
space $\mathbb{R}^n$ is
\[
H(x,y,t)=\frac{1}{(4\pi t)^{\frac n2}}\exp\left(-\frac{|x-y|^2}{4t}\right)
\]
for all $x,y\in \mathbb{R}^n$ and $t>0$. This indicates the Gaussian upper estimate
\eqref{low2} is sharp with respect to $t$ (at this time $\mathrm{R}=\mu=0$).
\end{remark}

\begin{remark}\label{remex3}
Li and Wang \cite{[LiWa]} recently proved a non-Gaussian upper bound for the
heat kernel of the Ricci flows induced by gradient shrinking Ricci solitons.
Our result focuses on a fixed gradient shrinking Ricci soliton and contains
a Gaussian exponential factor.
\end{remark}

For the heat kernel of the Laplace operator, Cheng, Li and Yau \cite{[CLY]} ever
proved upper Gaussian estimates on manifolds satisfying bounded sectional curvature and a
lower bound of the injectivity radius, which was later generalized by Cheeger, Gromov
and Taylor \cite{[CGT]} to manifolds with the Ricci curvature bounded below. In 1986,
Li and Yau \cite{[LY]} used the gradient estimate technique to derive sharp
Gaussian upper and lower bounds on manifolds with nonnegative Ricci curvature.
In 1990s, Grigor'yan \cite{[Gr]} and Saloff-Coste \cite{[Sal]} independently proved
similar estimates on manifolds satisfying the volume doubling property and the
Poincar\'e inequality, by using the Moser iteration technique. Davies \cite{[Da2]}
further developed Gaussian upper bounds under a mean value property assumption.
Recently, the first author and P. Wu \cite{[WuW]} applied De Giorgi-Nash-Moser
theory to derive sharp Gaussian upper and lower estimates for the weighted
heat kernel on smooth metric measure spaces with nonnegative Bakry-\'Emery Ricci
curvature. For the heat kernel of a general Schr\"odinger operator $-\Delta+Q$
for some $Q\in C^\infty(M)$, many authors studied global bounds for the heat
kernel on manifolds. The interested readers are referred to \cite{[Car],[Da],[DS],[DS2],[Gr2],[LY],[Pi],[Sa],[Zh1],[Zh2],[Zh3],[Zh4]}
and references therein.

The proof strategy for Theorem \ref{Mainup2} seems to be different from the
above-mentioned methods, and here its proof mainly includes two steps. In the first step
we apply a local Logarithmic Sobolev inequality for shrinking Ricci solitons to
give an upper bound for the Schr\"odinger heat kernel (see Theorem \ref{Mainup}),
which is motivated by the argument valid for manifolds \cite{[Da]}. In the second step
we extend the upper bound for the Schr\"odinger heat kernel to its upper
bound with a Gaussian exponential factor, whose argument involves upper
estimates for a weighted integral of the Schr\"odinger heat kernel (see Proposition
\ref{pro1}), by using a delicate iteration technique due to Grigor'yan \cite{[Gr1]}.

Below we give two applications of Schr\"odinger heat kernel estimates. On one
hand we will derive upper bounds for the Green's function of the Schr\"odinger operator
on gradient shrinking Ricci solitons. Recall that for Riemannian mainfolds, Li and
Yau \cite{[LY]} applied the gradient estimate technique to prove two-sided bounds of
classical Green's functions. Grigor'yan \cite{[Gr2]} studied two-sided bounds of
abstract Green's functions when some doubling property holds. Recently many properties
of weighted Green's functions on weighted manifolds have been investigated; see for
example \cite{[Gr2]}, \cite{[Pig]}, \cite{[WuW]} and references therein. Similar to the
manifold case, on a complete gradient shrinking Ricci soliton $(M,g,f)$, the Green's
function of the Schr\"odinger operator $-\Delta+a\mathrm{R}$ with $a\geq \frac 14$ is
defined by
\[
G^{\mathrm{R}}(x,y):=\int^\infty_0H^{\mathrm{R}}(x,y,t)dt
\]
if the integral on the right-hand side converges. Hence,
\begin{theorem}\label{Green}
Let $(M,g,f)$ be an $n$-dimensional $(n\ge 3)$ complete gradient shrinking Ricci soliton
satisfying \eqref{Eq1} and \eqref{Eq2} with scalar curvature $\mathrm{R}$ bounded from
above by a constant. If $G^{\mathrm{R}}(x,y)$, $x,y\in M$, exists,
then for any $c>4$, there exists a constant $C(n,c)$ depending on
$n$ and $c$, such that
\begin{equation}\label{Greenest}
G^{\mathrm{R}}(x,y)\le C(n,c)e^{-\mu}d(x,y)^{2-n},
\end{equation}
where $d(x,y)$ is the distance function from $x$ to $y$ and $\mu$ is Perelman's entropy functional.
\end{theorem}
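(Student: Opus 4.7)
The plan is direct: integrate the sharp Gaussian upper bound of Theorem~\ref{Mainup2} in the time variable. By the very definition of the Green's function, and inserting the estimate \eqref{low2}, one has
\[
G^{\mathrm{R}}(x,y) = \int_0^\infty H^{\mathrm{R}}(x,y,t)\,dt \le \frac{A(n,c)\,e^{-\mu}}{(4\pi)^{n/2}} \int_0^\infty t^{-n/2} \exp\!\left(-\frac{r^2}{ct}\right)\,dt,
\]
where $r = d(x,y)$ and $A(n,c)$ is the constant from Theorem~\ref{Mainup2}. The entire problem therefore reduces to controlling the scalar time integral on the right.

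To evaluate that integral I would use the substitution $s = r^2/(ct)$, so that $t = r^2/(cs)$ and $dt = -r^2/(cs^2)\,ds$. A straightforward computation converts it into a Gamma integral:
\[
\int_0^\infty t^{-n/2}\exp\!\left(-\frac{r^2}{ct}\right)\,dt = \frac{c^{(n-2)/2}}{r^{n-2}} \int_0^\infty s^{(n-4)/2}\,e^{-s}\,ds = \frac{c^{(n-2)/2}\,\Gamma\!\left(\frac{n-2}{2}\right)}{r^{n-2}}.
\]
Combining the two displays yields the claimed bound \eqref{Greenest} with the explicit constant
\[
B(n,c) = \frac{A(n,c)\,c^{(n-2)/2}\,\Gamma\!\left(\frac{n-2}{2}\right)}{(4\pi)^{n/2}}.
\]

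There is essentially no serious obstacle: the theorem is a direct integrated consequence of Theorem~\ref{Mainup2}. The only sensitive point is convergence at $s=0$ (equivalently $t\to\infty$), where the integrand behaves like $s^{(n-4)/2}$; integrability requires $(n-4)/2 > -1$, i.e.\ $n > 2$, which is exactly the hypothesis $n \ge 3$ in the statement. At $s=\infty$ (equivalently $t\to 0^+$) the Gaussian factor forces super-polynomial decay for fixed $r > 0$, so that endpoint is automatic. The same calculation moreover shows that the standing assumption ``if $G^{\mathrm{R}}(x,y)$ exists'' is automatically verified at every pair $x\neq y$ once Theorem~\ref{Mainup2} is in force and $n\ge 3$, since the integrated Gaussian bound is then finite.
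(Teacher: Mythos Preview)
Your proof is correct and follows the same overall idea as the paper---integrate the Gaussian bound of Theorem~\ref{Mainup2} in time---but your execution is cleaner. The paper splits $\int_0^\infty$ at $t=r^2$: on $[r^2,\infty)$ it drops the exponential factor and integrates $t^{-n/2}$ directly; on $[0,r^2]$ it substitutes $s=r^4/t$ and then invokes the elementary bound $\sup_{x>0} x^{n-2}e^{-x/c}<\infty$ to reduce that piece to another copy of $\int_{r^2}^\infty s^{-n/2}\,ds$. Your single substitution $s=r^2/(ct)$ collapses the whole integral to $\Gamma\!\bigl(\tfrac{n-2}{2}\bigr)$ in one stroke and even yields an explicit formula for $B(n,c)$, which the paper's argument does not. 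Your remark that the hypothesis ``if $G^{\mathrm{R}}(x,y)$ exists'' is in fact automatic for $x\neq y$ once Theorem~\ref{Mainup2} holds is a worthwhile addendum that the paper does not make.
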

\begin{remark}
The above exponent $2-n$ is sharp. Indeed, on the Gaussian shrinking Ricci soliton
$(\mathbb{R}^n,g_E,\frac{|x|^2}{4})$, where $g_E$ is the standard Euclidean metric,
we have $\mathrm{R}=0$ and $\mu=0$. In this case $G^{\mathrm{R}}(x,y)$ is just the
Euclidean Green's function given by $G^{\mathrm{R}}(x,y)=C(n)d(x,y)^{2-n}$
for some positive constant $C(n)$, where $n\ge3$.
\end{remark}

On the other hand, we will apply the Schr\"odinger heat kernel estimate to prove lower
bounds for eigenvalues of the operator Schr\"odinger $L$ on compact gradient
shrinking Ricci solitons, by adapting the argument for the Laplace operator on
manifolds \cite{[LY]}. Some basic spectral properties of the Schr\"odinger operator
on manifolds will be discussed in Section \ref{sec2}.
\begin{theorem}\label{eigen}
Let $(M,g,f)$ be an $n$-dimensional closed gradient shrinking Ricci soliton
satisfying \eqref{Eq1} and \eqref{Eq2}. Let $\{0<\lambda_1\leq\lambda_2\leq\ldots\}$
be the set of eigenvalues of the Schr\"odinger operator $-\Delta+a\mathrm{R}$ with
$a\geq \frac 14$. Then
\[
\lambda_k\ge\frac{2n\pi}{e}\left(\frac{k\,e^\mu}{V(M)}\right)^{2/n}
\]
for all $k\geq1$, where $V(M)$ is the volume of $M$ and $\mu$ is Perelman's entropy
functional.
\end{theorem}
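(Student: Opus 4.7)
The plan is to combine the spectral decomposition of the Schrödinger heat semigroup with the sharp on-diagonal bound obtained as the first step (Theorem \ref{Mainup}) in the proof of Theorem \ref{Mainup2}, and then optimize over time. Since $M$ is closed, $L=-\Delta+a\mathrm{R}$ is self-adjoint with compact resolvent, so its spectrum is discrete and we may choose an $L^2$-orthonormal basis of smooth eigenfunctions $\{\phi_k\}$ with $L\phi_k=\lambda_k\phi_k$. The Schrödinger heat kernel then admits the expansion
\[
H^{\mathrm{R}}(x,y,t)=\sum_{k=1}^{\infty}e^{-\lambda_k t}\phi_k(x)\phi_k(y).
\]
Setting $y=x$, integrating over $M$, and using orthonormality gives the trace identity $\sum_k e^{-\lambda_k t}=\int_M H^{\mathrm{R}}(x,x,t)\,dv$.

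Next I would feed in the sharp on-diagonal bound $H^{\mathrm{R}}(x,x,t)\le \frac{e^{-\mu}}{(4\pi t)^{n/2}}$, which is precisely what the local log-Sobolev step (Theorem \ref{Mainup}) yields on gradient shrinking solitons normalized by \eqref{Eq2}; here the Perelman entropy $\mu$ provides the sharp constant in the log-Sobolev inequality and hence in the ultracontractive estimate via Davies' method. Integrating over $M$ and using that the eigenvalues are non-decreasing, so $k\,e^{-\lambda_k t}\le\sum_{j=1}^{k}e^{-\lambda_j t}\le\sum_{j=1}^{\infty}e^{-\lambda_j t}$, I obtain
\[
k\,e^{-\lambda_k t}\le\frac{e^{-\mu}\,V(M)}{(4\pi t)^{n/2}},\qquad\text{i.e.}\qquad \lambda_k\ge\frac{1}{t}\log\!\left(\frac{k\,e^{\mu}(4\pi t)^{n/2}}{V(M)}\right).
\]

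The final step is to maximize the right-hand side in $t>0$. A direct differentiation shows the maximum is attained at
\[
t_{*}=\frac{e}{4\pi}\left(\frac{V(M)}{k\,e^{\mu}}\right)^{2/n},
\]
and substitution gives the value $\frac{n/2}{t_*}=\frac{2n\pi}{e}\bigl(k\,e^{\mu}/V(M)\bigr)^{2/n}$, exactly the claimed bound.

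The main (and really only) obstacle is securing the sharp on-diagonal constant $e^{-\mu}/(4\pi t)^{n/2}$ rather than just the $A(n,c)$-version of Theorem \ref{Mainup2}; without the sharp log-Sobolev constant coming from Perelman's entropy, the $\frac{2n\pi}{e}$ prefactor would be off. Everything else is the standard Minakshisundaram–Pleijel/Li–Yau counting argument and a one-variable optimization, and self-adjointness plus smoothness of the spectral decomposition on a closed manifold is routine.
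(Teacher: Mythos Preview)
Your proposal is correct and follows essentially the same route as the paper: invoke the sharp on-diagonal bound from Theorem \ref{Mainup}, pass to the trace via the eigenfunction expansion to get $k\,e^{-\lambda_k t}\le e^{-\mu}V(M)/(4\pi t)^{n/2}$, and then optimize in $t$. The only cosmetic difference is that the paper writes the optimal time as $t_0=n/(2\lambda_k)$ and solves, whereas you write $t_*=\frac{e}{4\pi}(V(M)/(ke^\mu))^{2/n}$ and substitute; these are the same critical point and yield the same constant $\frac{2n\pi}{e}$.
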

\begin{remark}
From the proof of Theorem \ref{eigen} in Section \ref{sec6}, we will see that
we can apply the same method to obtain similar eigenvalue estimates to allow
the compact gradient shrinking soliton to have convex boundaries with either
Dirichlet or Neumann boundary conditions.
\end{remark}

\begin{remark}\label{eigenrem}
For a bounded domain $\Omega\subset\mathbb{R}^n$, the well-known Weyl's asymptotic formula
of the $k$-th Dirichlet eigenvalue of the Laplace operator satisfies
\[
\lambda_k(\Omega)\sim c(n)\left(\frac{k}{V(\Omega)}\right)^{2/n},\quad k\to\infty,
\]
where $c(n)$ is the Weyl constant with $c(n)=4\pi^2\omega^{-2/n}_{n}$, $\omega_n$ is the volume of the
unit ball in $\mathbb{R}^n$. This indicates our lower eigenvalue estimates are sharp
for the exponent $2/n$ (at this time $\mathrm{R}=\mu=0$). Moreover the constant
$\frac{2n\pi}{e}\le c(n)$ and has the asymptotic property
\[
\lim_{n\to\infty}\frac{2n\pi}{e\cdot c(n)}=1.
\]
\end{remark}

We remark that Li and Yau \cite{[LYa]} used the Fourier transform method to get lower
bounds for Dirichlet eigenvalues of
the Laplace operator on a bounded domain $\Omega\subset \mathbb{R}^n$, which was later
generalized by them \cite{[LY]} to manifolds with Ricci curvature bounded below.
Grigor'yan \cite{[Gr2]} proved lower bounds for eigenvalues of the Schr\"odinger operator
under some assumption on the first eigenvalue. The author and P. Wu \cite{[WuW]}
obtained lower estimates for eigenvalues of the Witten-Laplace operator on compact
weighted manifolds.

The paper is organized as follows. In Section \ref{sec2}, we recall some basic
properties of Schr\"odinger heat kernels. We also introduce some identities
and a Logarithmic Sobolev inequality \cite{[LiWa]} on gradient shrinking Ricci
solitons. In Section \ref{sec3}, we apply the Logarithmic Sobolev inequality to
prove the ultracontractivity of Schr\"odinger heat kernels. In Section \ref{sec4},
we will prove Theorem \ref{Mainup2}. In Section \ref{sec5}, we apply Theorem
\ref{Mainup2} to derive an upper bound for the Green's function of the
Schr\"odinger operator. In Section \ref{sec6}, for compact gradient shrinking
Ricci solitons, we apply upper bounds for the Schr\"odinger heat kernel to
give eigenvalue estimates of the Schr\"odinger operator.

\section{Preliminaries}\label{sec2}
In this section, we summarize some basic facts about Schr\"odinger heat kernels
and gradient shrinking Ricci solitons. First we recall some basic results
regarding the Schr\"odinger heat kernel on manifolds. According to
Theorem 24.40 of \cite{[Choweta]}, we have the existence of the Schr\"odinger
heat kernel on manifolds.
\begin{theorem}\label{exist}
Let $(M,g)$ be a complete Riemannian manifold. If a given smooth function $Q(x)$
on manifold $(M,g)$ is bounded, then there exists a unique smooth Schr\"odinger
heat kernel $H^{Q}(x,y,t)$ for the operator $-\Delta+Q$.
\end{theorem}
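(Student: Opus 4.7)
The plan is to construct $H^Q$ as the monotone limit of Dirichlet heat kernels on an exhaustion of $M$, following the standard Dodziuk--Li--Yau approach for parabolic operators on complete manifolds. First I would fix an exhaustion $\Omega_1\subset\Omega_2\subset\cdots\subset M$ by precompact open sets with smooth boundary and $\bigcup_i\Omega_i=M$. On each $\Omega_i$, the operator $-\Delta+Q$ with Dirichlet boundary condition is a self-adjoint operator on $L^2(\Omega_i)$ with compact resolvent (this uses only precompactness of $\Omega_i$ and smoothness of $Q$), so it has a discrete spectrum $\{\lambda_k^{(i)}\}$ with $L^2$ orthonormal eigenfunctions $\{\phi_k^{(i)}\}$, and the Dirichlet heat kernel
\[
H^Q_{\Omega_i}(x,y,t)=\sum_{k=1}^\infty e^{-\lambda_k^{(i)}t}\phi_k^{(i)}(x)\phi_k^{(i)}(y)
\]
is well-defined and smooth for $t>0$.

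Next I would establish the qualitative properties of $H^Q_{\Omega_i}$. Writing $C:=\sup_M|Q|<\infty$, the substitution $u=e^{-Ct}v$ converts $(\partial_t-\Delta+Q)u=0$ into $(\partial_t-\Delta+(Q+C))v=0$ with $Q+C\ge0$, so the parabolic maximum principle applies and $H^Q_{\Omega_i}\ge 0$; strict positivity in the interior then follows from the strong maximum principle / Harnack inequality. Monotonicity $H^Q_{\Omega_i}\le H^Q_{\Omega_{i+1}}$ comes from applying the maximum principle to the difference on $\Omega_i\times(0,T]$, which vanishes initially and is nonnegative on $\partial\Omega_i$. Combined with the uniform pointwise upper bound
\[
H^Q_{\Omega_i}(x,y,t)\le e^{Ct}H^0_{\Omega_i}(x,y,t)\le e^{Ct}H(x,y,t),
\]
where $H$ is the classical heat kernel of $M$ (which exists as a minimal positive fundamental solution, Dodziuk), the monotone limit
\[
H^Q(x,y,t):=\lim_{i\to\infty}H^Q_{\Omega_i}(x,y,t)
\]
is finite everywhere and pointwise bounded.

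To upgrade pointwise convergence to smoothness and to verify the PDE in the limit, I would invoke interior parabolic Schauder / $L^p$ estimates applied to the sequence $H^Q_{\Omega_i}$ on any compact set $K\subset M\times M\times(0,\infty)$: the uniform bound and the equation $(\partial_t-\Delta+Q)H^Q_{\Omega_i}=0$ give uniform $C^{k,\alpha}$ control, so along a subsequence we get convergence in $C^\infty_{\mathrm{loc}}$, forcing the whole monotone limit $H^Q$ to be smooth and satisfy $(\partial_t-\Delta+Q)H^Q=0$. The initial condition $H^Q(\cdot,y,t)\to\delta_y$ as $t\to0$ is inherited from the Dirichlet heat kernels (again using the comparison with $H$).

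Finally, for minimality and uniqueness, suppose $\widetilde H$ is any other positive fundamental solution of $\partial_t-\Delta+Q$. Apply the maximum principle on $\Omega_i$ to $\widetilde H-H^Q_{\Omega_i}$: it is nonnegative on $\partial\Omega_i$ (since $H^Q_{\Omega_i}=0$ there and $\widetilde H>0$) and has zero initial data, hence $\widetilde H\ge H^Q_{\Omega_i}$ on $\Omega_i\times(0,\infty)$. Letting $i\to\infty$ yields $\widetilde H\ge H^Q$, so $H^Q$ is the minimal positive fundamental solution, and this minimality determines it uniquely. The main obstacle is the interplay between the unbounded geometry and the potential $Q$: one must be careful that the monotone limit stays finite and that parabolic regularity applies uniformly in the exhaustion, and this is exactly where the boundedness of $Q$ enters decisively, allowing the reduction to the classical heat kernel via the exponential shift.
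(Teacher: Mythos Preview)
Your proposal is correct and follows essentially the same route as the paper: the paper does not prove Theorem~\ref{exist} itself but cites it from \cite{[Choweta]} and then sketches exactly the exhaustion argument you give---construct Dirichlet heat kernels $H^Q_{\Omega_k}$ on precompact domains, use the maximum principle for positivity and monotonicity $0\le H^Q_{\Omega_k}\le H^Q_{\Omega_{k+1}}$, and pass to the limit. Your version is more fleshed out (the exponential shift $u=e^{-Ct}v$ to handle sign-indefinite $Q$, the comparison with the classical heat kernel to guarantee finiteness of the limit, and the invocation of interior parabolic regularity for smoothness), but the strategy is the same.
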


Similar to the classical heat kernel case, the Schr\"odinger heat kernel
$H^{Q}(x,y,t)$  can be regarded as the limit of the Dirichlet Schr\"odinger
heat kernels on a sequence of exhausting subsets in $M$; see \cite{[Choweta]}.
The idea of the proof is as follows. Let $\Omega_1\subset\Omega_2\subset...\subset M$
be an exhaustion of relatively compact domains with smooth boundary in $(M,g)$.
In each $\Omega_k$, $k=1,2...$, we can construct the Dirichlet heat kernel
$H_{\Omega_k}^{Q}(x,y,t)$ for the operator $-\Delta+Q$.
By the maximum principle we have
\[
0< H_{\Omega_k}^{Q}\le H_{\Omega_{k+1}}^{Q},
\]
and
\[
\int_{\Omega_k}H_{\Omega_k}^{Q}(x,y,t)dv(x)\leq1.
\]
Therefore
\[
H^{Q}(x,y,t):=\lim_{k\to\infty}H_{\Omega_k}^{Q}(x,y,t)
\]
exists in $L^1(M)$ for any $(x,y,t)\in M\times M\times\mathbb{R}^+$. Following
the argument of Chapter VIII of \cite{[Ch]},  we can show that the limit
$H^{Q}(x,y,t)$ is finite, and is a smooth minimal positive fundamental solution
to the heat-type equation $\partial_t u-\Delta u+Q u=0$ on $M$. Moreover, the
Schr\"odinger heat kernel satisfies the symmetry property
\[
H_{\Omega_k}^{Q}(x,y,t)=H_{\Omega_k}^{Q}(y,x,t),\quad  H^{Q}(x,y,t)=H^{Q}(y,x,t)
\]
and the semigroup identity
\[
H_{\Omega_k}^{Q}(x,y,t+s)
=\int_{\Omega_k}H_{\Omega_k}^{Q}(x,z,t)H_{\Omega_k}^{Q}(z,y,s)dv(z),
\]
\[
H^{Q}(x,y,t+s)=\int_MH^{Q}(x,z,t)H^{Q}(z,y,s)dv(z).
\]

Since $Q$ is bounded, the Schr\"odinger operator $-\Delta+Q$ is self-adjoint
and its spectrum shares similar properties of the Laplace operator case
(see \cite{[DS2]}). For a compact subdomain $\Omega\subset M$, by the elliptic theory we let
$\{\varphi_k\}^{\infty}_{k=0}$ in $L^2(\Omega)$ be the complete orthonormal sequence of
the Dirichlet eigenfunctions of the operator $-\Delta+Q$ with the corresponding
non-decreasing sequence of discrete eigenvalues $\{\lambda_k\}^{\infty}_{k=1}$ satisfying
$\{0<\lambda_1\leq\lambda_2\leq\ldots\}$. Then the Dirichlet Schr\"odinger heat kernel
of $-\Delta+Q$ has the eigenfunction expansion
\[
H_{\Omega}^{Q}(x,y,t)=\sum^{\infty}_{k=1}e^{-\lambda_kt}\varphi_k(x)\varphi_k(y).
\]
Clearly, this expansion can be used for closed manifold $M$, i.e., $\Omega=M$.

In particular, we consider heat kernels of the Schr\"odinger operator $-\Delta+a\mathrm{R}$
on $(M,g,f)$. By Chen's result \cite{[Chen]},
we know that the scalar curvature $\mathrm{R}$ is nonnegative. If $\mathrm{R}$ is
bounded from above by a constant, by Theorem \ref{exist}, then the Schr\"odinger heat kernel $H^{\mathrm{R}}(x,y,t)$ always uniquely exists on $(M,g, f)$. Meanwhile, the above-mentioned
semigroup identity, symmetry and spectrum properties remain true for $H^{\mathrm{R}}(x,y,t)$.

Next we explain why \eqref{Eq2} holds on gradient shrinking Ricci soliton \eqref{Eq1}.
Using \eqref{Eq1}, we get
\[
\mathrm{R}+\Delta f=\tfrac n2,
\]
and
\begin{equation}\label{condition}
C(g):=\mathrm{R}+|\nabla f|^2-(f+c)
\end{equation}
is a finite constant, where $c\in \mathbb{R}$ is a free parameter to be determined later
(see Chapter 27 in \cite{[Choweta2]}). Combining these equalities gives
\begin{equation}\label{identitycond}
2\Delta f-|\nabla f|^2+\mathrm{R}+(f+c)-n=-C(g).
\end{equation}

On an $n$-dimensional complete Riemannian manifold $(M,g)$,
Perelman's $\mathcal{W}$-entropy functional \cite{[Pe]} is defined by
\[
\mathcal{W}(g,\phi,\tau)
:=\int_M\Big[\tau\Big(|\nabla \phi|^2+\mathrm{R}\Big)+\phi-n\Big](4\pi\tau)^{-n/2}e^{-\phi}dv
\]
for some $\phi\in C^\infty(M)$ and $\tau>0$, when this entropy functional
is finite, and Perelman's $\mu$-entropy functional \cite{[Pe]} is defined by
\[
\mu(g,\tau):=\inf\Big\{\mathcal{W}(g,\phi,\tau)\Big|\phi\in C_0^\infty(M)\,\,\,\text{with}\,\,\, \int_M(4\pi\tau)^{-n/2}e^{-\phi}dv=1\Big\}.
\]
Carrillo and Ni \cite{[CaNi]} observed that the function $f+c$ is always a
minimizer of $\mu(g,1)$ on a complete (possibly non-compact) gradient shrinking
Ricci soliton $(M,g,f)$. Therefore, by \eqref{identitycond}, we have
\begin{equation}
\begin{aligned}\label{intbp}
\mu(g,1)=\mathcal{W}(g,f+c,1)
&:=\int_M\Big(|\nabla f|^2+\mathrm{R}+(f+c)-n\Big)(4\pi)^{-n/2}e^{-(f+c)}dv\\
&=\int_M\Big(2\Delta f-|\nabla f|^2+\mathrm{R}+(f+c)-n\Big)(4\pi)^{-n/2}e^{-(f+c)}dv\\
&=-C(g),
\end{aligned}
\end{equation}
where $c$ is a constant such that $\int_M(4\pi)^{-n/2}e^{-(f+c)}dv=1$.
Notice that the above integral formulas always hold (see \cite{[HaMu]}
for the detailed explanation). If $\mathrm{R}+|\nabla f|^2=f$, then we
deduce that $\mu(g,1)=c$ and $\int_M(4\pi)^{-n/2}e^{-f}dv=e^{\mu(g,1)}$
by using \eqref{condition} and \eqref{intbp}. Hence we get \eqref{Eq2} in
the introduction.

Finally, we introduce an important Logarithmic Sobolev inequality on gradient shrinking
Ricci solitons, which will be useful in our paper. By Carrillo-Ni's result \cite{[CaNi]},
Li and Wang \cite{[LiWa]} proved a sharp Logarithmic Sobolev inequality on gradient
shrinking Ricci solitons without any curvature condition.
\begin{lemma}\label{logsi}
Let $(M,g, f)$ be an $n$-dimensional complete gradient shrinking Ricci soliton satisfying \eqref{Eq1}
and \eqref{Eq2}. For each compactly supported locally Lipschitz function $\varphi$ with
$\int_\Omega\varphi^2dv=1$ and each number $\tau>0$,
\[
\int_\Omega\varphi^2\ln \varphi^2dv\leq\tau\int_\Omega\left(4|\nabla\varphi|^2+\mathrm{R}\varphi^2\right)dv
-\left[\mu+n+\frac n2\ln(4\pi\tau)\right],
\]
where $\mathrm{R}$ is the scalar curvature of $(M,g,f)$ and $\mu$ is Perelman's entropy functional.
\end{lemma}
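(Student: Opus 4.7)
The strategy is to extract the inequality as an instance of Perelman's variational bound $\mathcal{W}(g,\phi,\tau)\ge\mu(g,\tau)$, and then use the shrinking soliton structure to replace $\mu(g,\tau)$ by $\mu=\mu(g,1)$.

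For the first step, given $\varphi$ with $\int_\Omega\varphi^2\,dv=1$, I would make the change of variable
\[
\phi := -\ln(\varphi^2)-\frac{n}{2}\ln(4\pi\tau),
\]
so that $(4\pi\tau)^{-n/2}e^{-\phi}=\varphi^2$ and Perelman's normalization $\int(4\pi\tau)^{-n/2}e^{-\phi}\,dv=1$ is automatic. Using $|\nabla\phi|^2=4|\nabla\varphi|^2/\varphi^2$ on the positivity set of $\varphi$, substituting into the $\mathcal{W}$-functional yields
\[
\mathcal{W}(g,\phi,\tau)=\tau\int_\Omega\bigl(4|\nabla\varphi|^2+\mathrm{R}\varphi^2\bigr)dv-\int_\Omega\varphi^2\ln\varphi^2\,dv-\frac{n}{2}\ln(4\pi\tau)-n.
\]
Combining with $\mathcal{W}(g,\phi,\tau)\ge\mu(g,\tau)$ and rearranging already gives the claimed inequality, but with $\mu(g,\tau)$ in place of $\mu$ on the right-hand side.

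The decisive second step is to verify $\mu(g,\tau)\ge\mu$ for all $\tau>0$. This is where the soliton hypothesis enters. I would argue via the self-similar ancient Ricci flow $g(t)=(1-t)\phi_t^{\ast}g$ associated to the soliton, defined on $(-\infty,1)$. Scale and diffeomorphism invariance of $\mu$ give
\[
\mu\bigl(g(t),T-t\bigr)=\mu\Bigl(g,\tfrac{T-t}{1-t}\Bigr).
\]
Perelman's monotonicity of $\mu(g(t),T-t)$ in $t$ under the Ricci flow, applied separately with $T>1$ and $T<1$, then shows that $\sigma\mapsto\mu(g,\sigma)$ is non-decreasing on $(1,\infty)$ and non-increasing on $(0,1)$; both branches attain the value $\mu$ at $\sigma=1$, yielding $\mu(g,\tau)\ge\mu$ for every $\tau>0$.

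The remaining minor issue is that $\phi$ above is only Lipschitz and blows up on the zero set of $\varphi$, so Perelman's inequality must be applied in this generalized sense. This is handled by approximating $\varphi$ with smooth strictly positive functions $\varphi_\epsilon$ built from $(\varphi^2+\epsilon)^{1/2}$ (suitably truncated in the noncompact case) and renormalized, applying the inequality to $\varphi_\epsilon$, and passing to the limit $\epsilon\to 0^+$ by dominated convergence. I expect the main obstacle to be the rigidity step $\mu(g,\tau)\ge\mu$: it is not a computation but an essentially global fact about shrinking solitons, relying on Carrillo and Ni's identification of $f+\mu$ as the minimizer at $\tau=1$ together with the self-similarity of the underlying Ricci flow.
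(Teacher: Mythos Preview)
The paper does not prove Lemma~\ref{logsi}; it simply quotes it from Li--Wang \cite{[LiWa]} (building on Carrillo--Ni \cite{[CaNi]}). Your proposal is essentially the argument those references use: rewrite the log-Sobolev inequality as $\mathcal{W}(g,\phi,\tau)\ge\mu(g,\tau)$ via the substitution $\varphi^2=(4\pi\tau)^{-n/2}e^{-\phi}$, and then invoke the soliton structure to replace $\mu(g,\tau)$ by $\mu(g,1)$. So your route and the literature's route coincide.

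Two small cautions on the execution. First, your monotonicity argument shows $\sigma\mapsto\mu(g,\sigma)$ is monotone on each of $(0,1)$ and $(1,\infty)$, but the parameter $\sigma=(T-t)/(1-t)$ never actually hits $1$; to conclude $\mu(g,\tau)\ge\mu(g,1)$ you still need continuity of $\mu(g,\cdot)$ at $\tau=1$ (or the direct identification of $f$ as minimizer at every scale, which is how Carrillo--Ni close the gap). Second, Perelman's monotonicity for $\mu(g(t),T-t)$ along Ricci flow is standard on closed manifolds but requires justification in the complete noncompact setting; this is precisely the content supplied by \cite{[CaNi]} and \cite{[LiWa]}, so you are right to flag it as the nontrivial input rather than a routine computation.
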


If the scalar curvature is bounded from above by a constant, Lemma \ref{logsi} reduces
to the defected	Logarithmic Sobolev inequality on manifolds. Li and Wang \cite{[LiWa]}
used the Logarithmic Sobolev inequality to prove a Sobolev inequality on complete gradient
shrinking Ricci solitons. In this paper, we will apply the Logarithmic Sobolev inequality
to derive sharp Gaussian upper bounds for the Schr\"odinger heat kernel.


\section{Ultracontractivity}\label{sec3}

In this section, we will apply the Logarithmic Sobolev inequality (Lemma \ref{logsi}) to
derive upper bounds for the Schr\"odinger heat kernels on gradient shrinking
Ricci solitons. In other words, we will show that the Schr\"odinger heat kernel enjoys
the ultracontractivity. A similar result has been explored for heat kernels
of the Laplace operator on manifolds \cite{[Da],[Zh5]}.

\begin{theorem}\label{Mainup}
Let $(M,g, f)$ be an $n$-dimensional complete gradient shrinking Ricci soliton satisfying
\eqref{Eq1} and \eqref{Eq2} with scalar curvature $\mathrm{R}$ bounded above by a constant.
Then the Schr\"odinger heat kernel of the operator $-\Delta+a\mathrm{R}$ with $a\ge\frac 14$
satisfies
\begin{equation}\label{low}
H^{\mathrm{R}}(x,y,t)\le\frac{e^{-\mu}}{(4\pi t)^{\frac n2}}
\end{equation}
for all $x,y\in M$ and $t>0$, where $\mu$ is Perelman's entropy functional.
\end{theorem}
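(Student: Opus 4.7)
The plan is to adapt Davies' ultracontractivity argument for the Laplacian (as in \cite{[Da]}) to the Schr\"odinger operator $L=-\Delta+a\mathrm{R}$, using the Li--Wang Logarithmic Sobolev inequality (Lemma \ref{logsi}) in place of the Euclidean one. The crucial feature of Lemma \ref{logsi} is that it already contains a scalar curvature term $\mathrm{R}\varphi^{2}$, which is exactly what is needed to absorb the potential in $L$. I would first work on a compact exhaustion $\Omega_{k}\nearrow M$ with the Dirichlet kernel $H^{\mathrm{R}}_{\Omega_{k}}$, fix $T>0$, take a nonnegative $u_{0}\in C_{0}^{\infty}(\Omega_{k})$ with $\|u_{0}\|_{1}=1$, and let $u(x,t)=\int H^{\mathrm{R}}_{\Omega_{k}}(x,y,t)u_{0}(y)\,dv(y)$, so that $\partial_{t}u=\Delta u-a\mathrm{R}u$ with Dirichlet boundary values.

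The next step is to follow the standard $L^{p}$-evolution trick with a time-dependent exponent $p(t)=T/(T-t)$, $t\in[0,T)$, so that $p(0)=1$ and $p(t)\to\infty$ as $t\to T^{-}$. Setting $v:=u^{p/2}/\|u\|_{p}^{p/2}$, which is Lipschitz, compactly supported in $\Omega_{k}$, and satisfies $\int v^{2}\,dv=1$, a direct computation (differentiating $\|u\|_{p}^{p}$, using the PDE, and integrating by parts) produces
\[
\frac{d}{dt}\ln\|u\|_{p}=\frac{p'}{p^{2}}\int v^{2}\ln v^{2}\,dv-\frac{4(p-1)}{p^{2}}\int|\nabla v|^{2}\,dv-a\int\mathrm{R}\,v^{2}\,dv.
\]
Applying Lemma \ref{logsi} to $v$ with the choice $\tau=(p-1)/p'$ makes the gradient term cancel exactly, and the remaining coefficient of $\int \mathrm{R}v^{2}\,dv$ becomes $(p-1)/p^{2}-a$. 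Since $(p-1)/p^{2}$ attains its maximum $1/4$ at $p=2$, the hypothesis $a\ge 1/4$ combined with Chen's nonnegativity $\mathrm{R}\ge 0$ forces this term to be $\le 0$, so it can be dropped, leaving
\[
\frac{d}{dt}\ln\|u\|_{p}\le -\frac{p'}{p^{2}}\Big[\mu+n+\frac{n}{2}\ln(4\pi\tau)\Big].
\]

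With $p(t)=T/(T-t)$ one has $p'/p^{2}=1/T$ and $\tau=t(T-t)/T$. Integrating from $0$ to $T$, using $\ln\|u_{0}\|_{1}=0$ and the explicit identity $\int_{0}^{T}\ln(t(T-t)/T)\,dt=T\ln T-2T$, I expect the right-hand side to collapse exactly to $-\mu-\tfrac{n}{2}\ln(4\pi T)$. Passing $t\to T^{-}$ and using $\|u(T)\|_{p(t)}\to\|u(T)\|_{\infty}$ on the compact domain $\Omega_{k}$ then gives the operator bound $\|e^{-TL}\|_{1\to\infty}\le e^{-\mu}(4\pi T)^{-n/2}$, which by duality/symmetry translates into the pointwise kernel bound $H^{\mathrm{R}}_{\Omega_{k}}(x,y,T)\le e^{-\mu}/(4\pi T)^{n/2}$. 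Monotone passage $k\to\infty$, via the exhaustion property from Section \ref{sec2}, gives \eqref{low}.

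The most delicate step, and the place where the structure of the problem really enters, is verifying that the coefficient $(p-1)/p^{2}-a$ is nonpositive for all $p\ge 1$: this is what makes $a=1/4$ the sharp threshold and is precisely why the Schr\"odinger potential $a\mathrm{R}$ marries perfectly with the curvature-corrected log-Sobolev inequality on the soliton. The remaining technical care is bookkeeping: ensuring that the time-derivative and integration-by-parts identity for $\|u\|_{p}^{p}$ is carried out cleanly (all Dirichlet boundary terms vanish on $\Omega_{k}$), that the rewriting in terms of the $L^{2}$-normalized $v$ is consistent, and that the limits $t\to T^{-}$ and $k\to\infty$ are taken in the correct order — all standard once the main computation is in place.
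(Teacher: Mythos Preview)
Your proposal is correct and follows essentially the same approach as the paper's own proof: the Davies $L^{p}$-evolution argument with $p(t)=T/(T-t)$, the substitution $v=u^{p/2}/\|u\|_{p}^{p/2}$, application of the Li--Wang log-Sobolev inequality with $\tau=(p-1)/p'$, and the compact-exhaustion limit are all exactly as in the paper. The only cosmetic difference is in the handling of the scalar-curvature coefficient: the paper completes the square to show $p-1-ap^{2}\le 0$ for $a\ge 1/4$, whereas you phrase the equivalent fact as $(p-1)/p^{2}\le 1/4$; you are also a bit more explicit in invoking Chen's result $\mathrm{R}\ge 0$, which the paper uses tacitly at that step.
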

\begin{remark}
As in Remark \ref{remex}, the assumption on scalar curvature here is only needed
to guarantee the existence and semigroup property of the Schr\"odinger heat kernel
$H^{\mathrm{R}}(x,y,t)$ on shrinking Ricci solitons and it is not directly used
in the proof of Theorem \ref{Mainup}.
\end{remark}

\begin{remark}
Recently, Li and Wang \cite{[LiWa]} proved a similar upper bound for the conjugate heat
kernel on ancient solutions of the Ricci flow induced by a Ricci shrinking soliton.
For a fixed-metric Ricci shrinking soliton, the study of the Schr\"odinger heat
kernel seems to be more reasonable than the evolved-metric setting.
\end{remark}

\begin{proof}[Proof of Theorem \ref{Mainup}]
By the approximation argument, it suffices to prove \eqref{low} for the Dirichlet
Schr\"odinger heat kernel $H_{\Omega}^\mathrm{R}(x,y,t)$ of any compact set $\Omega$
in $(M,g,f)$. In fact, let $\Omega_i$, $i=1,2,...$,  be a compact exhaustion of $M$
such that $\Omega_i\subset\Omega_{i+1}$ and $\cup_i\Omega_i=M$.
If we are able to prove \eqref{low} for the Dirichlet Schr\"odinger heat kernel
$H_{\Omega_i}^\mathrm{R}(x,y,t)$ for any $i$, then the result follows by letting
$i\to\infty$.

We will use the argument of \cite{[Da]} (see also \cite{[Zh5]}) to give the estimate
\eqref{low}. Let $u=u(x,t)$, $t\in[0,T]$, be a smooth solution to the heat-type Schr\"odinger
equation
\[
(\partial_t+L)u=0,
\]
where $L=-\Delta+a\mathrm{R}$, in a compact set $\Omega\subset M$ with Dirichlet boundary
condition: $u(x,t)=0$ on $\partial\Omega$. Then $u(x,t)$ can be written as
\[
u(x,t)=\int_\Omega u(y,0)H_{\Omega}^\mathrm{R}(x,y,t)dv(y),
\]
where $H_{\Omega}^\mathrm{R}(x,y,t)$ denotes the Schr\"odinger heat kernel of the operator
$L$ in the compact set $\Omega\subset M$.

In the following, we shall estimate
\[
\|u\|_{p(t)}:=\left(\int_\Omega|u|^{p(t)}dv\right)^{\frac{1}{p(t)}},
\]
where $p(t)=\frac{T}{T-t}$, $t\in[0,T]$, which obviously satisfies $p(0)=1$ and $p(T)=\infty$.
From Lemma 2.2.2 of \cite{[Da]}, we know that $\|u\|_{p(t)}$ is a continuously differentiable
function of $t$. Therefore, we compute
\begin{equation*}
\begin{aligned}
\partial_t\|u\|_{p(t)}&=-\frac{p'(t)}{p^2(t)}\|u\|_{p(t)}\cdot\ln\left(\|u\|^{p(t)}_{p(t)}\right)\\
&\quad+\frac{\|u\|^{1-p(t)}_{p(t)}}{p(t)}\left[p'(t)\int_\Omega u^{p(t)}\ln udv+p(t)\int_\Omega u^{p(t)-1}u_tdv\right]\\
&=-\frac{p'(t)}{p^2(t)}\|u\|_{p(t)}\cdot\ln\left(\|u\|^{p(t)}_{p(t)}\right)\\
&\quad+\frac{\|u\|^{1-p(t)}_{p(t)}}{p(t)}\left[p'(t)\int_\Omega u^{p(t)}\ln udv+p(t)\int_\Omega u^{p(t)-1}\Delta udv-a
p(t)\int_\Omega\mathrm{R}udv\right].
\end{aligned}
\end{equation*}
Multiplying by $p^2(t)\|u\|^{p(t)}_{p(t)}$ in the above equality and integrating by parts for the
term $\Delta u$, we have
\begin{equation*}
\begin{aligned}
p^2(t)\|u\|^{p(t)}_{p(t)}\cdot\partial_t\|u\|_{p(t)}
&=-p'(t)\|u\|^{1+p(t)}_{p(t)}\cdot\ln\left(\|u\|^{p(t)}_{p(t)}\right)+p(t)p'(t)\|u\|_{p(t)}\int_\Omega u^p\ln udv\\
&\quad-p^2(t)(p(t)-1)\|u\|_{p(t)}\int_\Omega u^{p(t)-2}|\nabla u|^2dv-ap^2(t)\|u\|_{p(t)}\int_\Omega\mathrm{R}u^pdv.
\end{aligned}
\end{equation*}
Dividing by $\|u\|_{p(t)}$ in the above equality yields
\begin{equation*}
\begin{aligned}
p^2(t)\|u\|^{p(t)}_{p(t)}\cdot\partial_t\left(\ln\|u\|_{p(t)}\right)
&=-p'(t)\|u\|^{p(t)}_{p(t)}\cdot\ln\left(\|u\|^{p(t)}_{p(t)}\right)+p(t)p'(t)\int_\Omega u^p\ln udv\\
&\quad-4(p(t)-1)\int_\Omega|\nabla u^{\frac{p(t)}{2}}|^2dv-ap^2(t)\int_\Omega\mathrm{R}u^pdv,
\end{aligned}
\end{equation*}
which further implies
\begin{equation}
\begin{aligned}\label{equa}
p^2(t)\cdot\partial_t\left(\ln\|u\|_{p(t)}\right)
&=-p'(t)\cdot\ln\left(\|u\|^{p(t)}_{p(t)}\right)+\frac{p(t)p'(t)}{\|u\|^{p(t)}_{p(t)}}\int_\Omega u^p\ln udv\\
&\quad-\frac{4(p(t)-1)}{\|u\|^{p(t)}_{p(t)}}\int_\Omega|\nabla
u^{\frac{p(t)}{2}}|^2dv-\frac{ap^2(t)}{\|u\|^{p(t)}_{p(t)}}\int_\Omega\mathrm{R}u^pdv.
\end{aligned}
\end{equation}
We now introduce a new quantity to simplify equality \eqref{equa}. Set
\[
w:=\frac{u^{\frac{p(t)}{2}}}{\|u^{\frac{p(t)}{2}}\|_2}.
\]
Then, we see that
\[
w^2=\frac{u^{p(t)}}{\|u\|^{p(t)}_{p(t)}},\quad \|w\|_2=1
\quad \text{and}\quad \ln w^2=\ln u^{p(t)}-\ln\left(\|u\|^{p(t)}_{p(t)}\right).
\]
So, we have
\begin{equation*}
\begin{aligned}
p'(t)\int_\Omega w^2\ln w^2dv&=p'(t)\int_\Omega\frac{u^{p(t)}}{\|u\|^{p(t)}_{p(t)}}\left[\ln
u^{p(t)}-\ln\left(\|u\|^{p(t)}_{p(t)}\right)\right]dv\\
&=\frac{p(t)p'(t)}{\|u\|^{p(t)}_{p(t)}}\int_\Omega u^{p(t)}\ln udv-p'(t)\ln\left(\|u\|^{p(t)}_{p(t)}\right).
\end{aligned}
\end{equation*}
Using the above equality, \eqref{equa} can be simplified as
\begin{equation}
\begin{aligned}\label{equa2}
p^2(t)\partial_t\left(\ln\|u\|_{p(t)}\right)
&=p'(t)\int_\Omega w^2\ln w^2dv-4(p(t)-1)\int_\Omega|\nabla w|^2dv-ap^2(t)\int_\Omega\mathrm{R}w^2dv\\
&=p'(t)\left[\int_\Omega w^2\ln w^2dv-\frac{4(p(t)-1)}{p'(t)}\int_\Omega|\nabla
w|^2dv-\frac{ap^2(t)}{p'(t)}\int_\Omega\mathrm{R}w^2dv\right].
\end{aligned}
\end{equation}
Now we want to apply the Logarithmic Sobolev inequality (Lemma \ref{logsi}) to
estimate \eqref{equa2}. Indeed, if we choose
\[
\varphi=w \quad \text{and}\quad 4\tau=\frac{4(p(t)-1)}{p'(t)}=\frac{4t(T-t)}{T}\leq T
\]
in Lemma \ref{logsi}, then this gives
\[
\int_\Omega w^2\ln w^2dv\leq\frac{(p(t)-1)}{p'(t)}\int_\Omega\left(4|\nabla w|^2+\mathrm{R}w^2\right)dv
-\left[\mu+n+\frac n2\ln(4\pi\tau)\right].
\]
Using this, \eqref{equa2} can be reduced to
\[
p^2(t)\partial_t\left(\ln\|u\|_{p(t)}\right)
\le p'(t)\left[\frac{p(t)-1-ap^2(t)}{p'(t)}\int_\Omega\mathrm{R}w^2dv-\mu-n-\frac n2\ln(4\pi\tau)\right].
\]
Since the scalar curvature $\mathrm{R}\ge 0$ on $(M,g,f)$ due to Chen \cite{[Chen]} and
\begin{equation*}
\begin{aligned}
p(t)-1-ap^2(t)&=-a\left(p(t)-\frac{1}{2a}\right)^2+\left(\frac{1}{4a}-1\right)\\
&\le0,
\end{aligned}
\end{equation*}
where we used $a\geq\frac 14$ in the second inequality above, then
\[
p^2(t)\partial_t\left(\ln\|u\|_{p(t)}\right)
\le p'(t)\left[-\mu-n-\frac n2\ln(4\pi\tau)\right].
\]
Noticing that
\[
\frac{p'(t)}{p^2(t)}=\frac 1T\quad \text{and} \quad\tau=\frac{t(T-t)}{T},
\]
then we obtain
\[
\partial_t\left(\ln\|u\|_{p(t)}\right)
\le \frac 1T\left[-\mu-n-\frac n2\ln\frac{4\pi t(T-t)}{T}\right].
\]
Integrating the above inequality from $0$ to $T$ with respect to $t$, we have
\[
\ln\left(\frac{\|u(x,T)\|_{p(T)}}{\|u(x,0)\|_{p(0)}}\right)
\le -\mu-\frac n2\ln(4\pi)-\frac n2\ln T.
\]
Notice that $p(0)=1$ and $p(T)=\infty$, and we have
\[
\|u(x,T)\|_{\infty}\le\|u(x,0)\|_1\cdot\frac{e^{-\mu}}{(4\pi T)^{\frac n2}}.
\]
Since
\[
u(x,T)=\int_\Omega u(y,0)H_{\Omega}^\mathrm{R}(x,y,T)dv(y),
\]
then we conclude
\begin{equation}\label{CHUP}
H_{\Omega}^\mathrm{R}(x,y,T)\le\frac{e^{-\mu}}{(4\pi T)^{\frac n2}}
\end{equation}
and the result follows since $T$ is arbitrary.
\end{proof}

By a similar argument, when $a=0$, we also have
\begin{proposition}\label{corup}
Let $(M,g, f)$ be an $n$-dimensional complete gradient shrinking Ricci soliton
satisfying \eqref{Eq1} and \eqref{Eq2}. If the scalar curvature $\mathrm{R}$ of $(M,g, f)$
satisfies
\[
\mathrm{R}\le C_R
\]
for some constant $C_R\ge 0$, then the heat kernel $H(x,y,t)$
of the Laplace operator satisfies
\begin{equation}\label{corlow}
H(x,y,t)\leq\frac{e^{-\mu}}{(4\pi t)^{\frac n2}}\exp\left(\frac{C_R\,t}{6}\right)
\end{equation}
for all $x,y\in M$ and $t>0$, where $\mu$ is Perelman's entropy functional.
\end{proposition}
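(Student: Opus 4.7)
The argument of Theorem \ref{Mainup} adapts with only two small modifications when $a=0$. All the preparatory steps---reducing to a Dirichlet heat kernel on an exhaustion, analyzing the evolution of $\|u\|_{p(t)}$ with $p(t)=T/(T-t)$, introducing the normalized function $v=u^{p(t)/2}/\|u^{p(t)/2}\|_2$, and invoking Lemma \ref{logsi} with $4\tau=4(p(t)-1)/p'(t)$---go through verbatim and yield the same inequality
\begin{equation*}
p^2(t)\,\partial_t(\ln\|u\|_{p(t)})\le p'(t)\left[\frac{p(t)-1-ap^2(t)}{p'(t)}\int_\Omega \mathrm{R}\, v^2\, dv-\mu-n-\frac{n}{2}\ln(4\pi\tau)\right].
\end{equation*}

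The first modification concerns the scalar curvature term. With $a\ge 1/4$ the coefficient $p(t)-1-ap^2(t)$ was nonpositive and the integral could simply be discarded; with $a=0$ it equals $p(t)-1 = t/(T-t)\ge 0$, so we must instead bound the integral from above. The hypothesis $\mathrm{R}\le C_R$ together with $\int_\Omega v^2\,dv=1$ gives $\int_\Omega \mathrm{R}\,v^2\,dv\le C_R$. Dividing by $p^2(t)$ and using $p'(t)/p^2(t)=1/T$ and $\tau=t(T-t)/T$ produces
\begin{equation*}
\partial_t(\ln\|u\|_{p(t)})\le \frac{t(T-t)}{T^2}C_R+\frac{1}{T}\left[-\mu-n-\frac{n}{2}\ln\frac{4\pi t(T-t)}{T}\right].
\end{equation*}

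The second modification is the integration in $t$ from $0$ to $T$. The bracketed expression contributes $-\mu-\tfrac n2 \ln(4\pi T)$ exactly as in Theorem \ref{Mainup}, while the elementary identity $\int_0^T t(T-t)/T^2\,dt = T/6$ accounts for the new term $C_RT/6$. Exponentiating and inserting an arbitrary $L^1$ initial datum gives $\|u(x,T)\|_\infty\le \|u(x,0)\|_1\cdot \frac{e^{-\mu}}{(4\pi T)^{n/2}}\exp(C_RT/6)$, which is \eqref{corlow} after extracting the kernel and letting $T>0$ be arbitrary. The main (and only) obstacle is the sign change of the coefficient in front of the scalar curvature integral; the upper bound $\mathrm{R}\le C_R$ is exactly what is needed to absorb it, at the cost of the Gaussian-free exponential factor $e^{C_R t/6}$.
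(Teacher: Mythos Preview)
Your argument is correct and is exactly the ``similar argument'' the paper alludes to: the paper gives no separate proof of Proposition~\ref{corup}, only the remark that it follows by the same computation as Theorem~\ref{Mainup} with $a=0$. Your two modifications---replacing the sign argument on $p(t)-1-ap^2(t)$ by the bound $\int_\Omega \mathrm{R}\,v^2\,dv\le C_R$, and the elementary integral $\int_0^T t(T-t)/T^2\,dt=T/6$---are precisely the missing details and are handled correctly.
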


In the end of this section, by using the argument of Varopoulos \cite{[Varo2]}, we
can apply Theorem \ref{Mainup} to give a Sobolev inequality proved by Li and Wang
(see Corollary 5.13 in \cite{[LiWa]}) on complete gradient shrinking Ricci solitons.
Here we only provide the result without proof. The detailed proof could follow the
argument of Theorem 11.6 in \cite{[Lip]} by using the Schr\"odinger operator $L$
instead of the Laplace operator.
\begin{proposition}
Let $(M,g, f)$ be an $n$-dimensional complete gradient shrinking Ricci soliton satisfying
\eqref{Eq1} and \eqref{Eq2} with scalar curvature $\mathrm{R}$ bounded above by a constant.
Then there exists a constant depending only on $n$ such that
\[
\left(\int_{B_r(p)} u^{\frac{2n}{n-2}}\,dv\right)^{\frac{n-2}{n}}\le C(n)e^{-\frac{2\mu}{n}} \int_{B_r(p)}\left(|\nabla u|^2+a\mathrm{R}\,u^2\right) dv
\]
for each compactly supported smooth function $u$ with supported in a geodesic ball
$B_r(p)$ of radius $r$ with center at $p\in M$. Here $\mu:=\mu(g,1)$ is Perelman's
entropy functional, and $a$ is a constant with $a\ge\frac 14$.
\end{proposition}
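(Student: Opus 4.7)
The plan is to derive this Sobolev inequality from the ultracontractive heat kernel bound in Theorem \ref{Mainup} by invoking Varopoulos's classical equivalence between $L^1$--$L^\infty$ semigroup bounds and Sobolev inequalities. Theorem \ref{Mainup} gives $\|e^{-tL}f\|_\infty \le Kt^{-n/2}\|f\|_1$ for all $t>0$ with $K = e^{-\mu}/(4\pi)^{n/2}$. The operator $L = -\Delta + a\mathrm{R}$ is non-negative self-adjoint on $L^2(M,dv)$: Chen's theorem ensures $\mathrm{R}\ge 0$, so $\langle Lu,u\rangle = \int_M(|\nabla u|^2 + a\mathrm{R}u^2)\,dv \ge 0$, and hence $L^{1/2}$ is well-defined by the spectral theorem. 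Moreover $\{e^{-tL}\}$ is sub-Markovian: positivity of $H^\mathrm{R}$ makes the semigroup positivity-preserving, and the maximum principle applied to $\partial_t u = \Delta u - a\mathrm{R}u$ with $a\mathrm{R}\ge 0$ yields $\|e^{-tL}\|_{\infty\to\infty}\le 1$ (and, by symmetry of the kernel, $\|e^{-tL}\|_{1\to 1}\le 1$).

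With these ingredients in hand I would execute Varopoulos's argument in three steps. First, interpolate between $\|e^{-tL}\|_{1\to\infty}\le Kt^{-n/2}$ and $\|e^{-tL}\|_{2\to 2}\le 1$ via Riesz--Thorin to obtain $\|e^{-tL}\|_{2\to q}\le (Kt^{-n/2})^{1/2 - 1/q}$ for $2\le q\le\infty$. Second, exploit the Bochner integral representation
\[
L^{-1/2} = \frac{1}{\sqrt{\pi}}\int_0^\infty t^{-1/2}e^{-tL}\,dt,
\]
split the integral at a threshold $t_0>0$ and optimize $t_0$ (a Marcinkiewicz-type weak-type argument) to conclude that $L^{-1/2}$ extends to a bounded operator from $L^2(M)$ into $L^{2n/(n-2)}(M)$ with operator norm controlled by $C'(n)K^{1/n}$. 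Third, given any $u$ compactly supported in $B_p(r)$, write $u = L^{-1/2}(L^{1/2}u)$, so that $\|u\|_{2n/(n-2)} \le C'(n)K^{1/n}\|L^{1/2}u\|_2$; squaring and using $\|L^{1/2}u\|_2^2 = \langle Lu,u\rangle = \int_M(|\nabla u|^2 + a\mathrm{R}u^2)\,dv$ produces
\[
\|u\|_{2n/(n-2)}^2 \le C(n) K^{2/n}\int_M\bigl(|\nabla u|^2 + a\mathrm{R}u^2\bigr)dv.
\]
Since $K^{2/n} = e^{-2\mu/n}/(4\pi)$, the factor $1/(4\pi)$ is absorbed into $C(n)$, and the support condition on $u$ reduces both integrals to $B_p(r)$, matching the stated inequality.

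The main technical obstacle is the second step, namely passing from the family of $L^p$--$L^q$ semigroup bounds to boundedness of $L^{-1/2}:L^2\to L^{2n/(n-2)}$: the defining integral for $L^{-1/2}$ is not absolutely convergent in the strong operator topology at either $0$ or $\infty$, so one must use the truncation-and-optimization trick to handle the two tails separately. This is precisely what Theorem 11.6 in the cited reference carries out for $-\Delta$, and the argument goes through verbatim with $L$ in place of $-\Delta$ because it relies only on non-negativity, self-adjointness, sub-Markovianity, and the ultracontractive bound, all of which we have verified. Everything else in the proof is routine spectral-theoretic bookkeeping.
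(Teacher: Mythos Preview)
Your proposal is correct and follows exactly the approach the paper indicates: the paper does not give a proof but simply says to follow Varopoulos's argument as in Theorem~11.6 of \cite{[Lip]} with the Schr\"odinger operator $L$ in place of $-\Delta$, and that is precisely what you outline, supplying in addition the verification that $L$ is non-negative self-adjoint (via $\mathrm{R}\ge 0$) and that $e^{-tL}$ is sub-Markovian. If anything, your write-up is more detailed than what the paper provides.
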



\section{Gaussian upper bound}\label{sec4}
In this section, we will follow the argument of Grigor'yan \cite{[Gr1]} to prove
Theorem \ref{Mainup2}. Let $(M,g, f)$ be a gradient shrinking Ricci soliton.
For a pre-compact region $\Omega\subset M$ and a compact set $K\subset\Omega$,
let $u(x,t)$ be a smooth solution to the Dirichlet problem for the equation
$(\partial_t+L)u=0$ in $\Omega\times(0,T)$ (with an initial condition having
a support on $K$), where $L=-\Delta+a\mathrm{R}$. For such a solution $u(x,t)$, 
we consider two integrals
\[
I(t):=\int_\Omega u^2(x,t)dv
\]
and
\[
E_D(t):=\int_\Omega u^2(x,t)\exp\left(\frac{d^2(x,K)}{Dt}\right)dv,
\]
where $D$ is a positive number. Obviously, $I(t)\le E_D(t)$. In the following, we will prove
a reverse inequality in some ways.

\begin{proposition}\label{pro1}
Let $u(x,t)$ be a smooth solution to the Dirichlet problem for the equation $(\partial_t+L)u=0$
on $(M,g, f)$. Assume that for any $t\in(0,T)$,
\begin{equation}\label{Iint}
I(t)\le \frac{e^{-\mu}}{(8\pi t)^{\frac n2}}.
\end{equation}
Then, for any $\gamma>1$, $D>2$ and for all $t\in(0,T)$,
\[
E_D(t)\leq \frac{4e^{-\mu}}{(8\pi\delta t)^{\frac n2}}
\]
for some $\delta=\delta(D,\gamma)>0$. Here $\mu$ is Perelman's entropy functional.
\end{proposition}

In order to prove this proposition, we start from a useful lemma.
\begin{lemma}\label{lem1}
Under the hypotheses of Proposition \ref{pro1}, for any $\gamma>1$,
there exists $D_0=D_0(\gamma)>2$, such that
\[
I_R(t)\leq \frac{2e^{-\mu}}{(8\pi \frac{t}{\gamma})^{\frac n2}}\exp\left(-\frac{R^2}{D_0t}\right)
\]
for all $R>0$ and $t\in(0,T)$, where $\mu$ is Perelman's entropy functional, and
\[
I_R(t):=\int_{\Omega\setminus B(K,R)}u^2(x,t)dv.
\]
Here $B(K,R)$ denotes the open $R$-neighbourhood of the set $K$.
\end{lemma}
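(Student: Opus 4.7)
My plan is to combine a Davies-type integral maximum principle for the Schr\"odinger heat equation with a Grigor'yan-style iteration over space-time scales.

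First, I will establish the integral maximum principle: for any $C^{1,1}$ function $\xi(x,s)$ on $\Omega\times(0,T)$ satisfying $\partial_s\xi+\frac12|\nabla\xi|^2\le 0$, the quantity $F(s):=\int_\Omega u^2(x,s)\,e^{\xi(x,s)}\,dv$ is non-increasing in $s$. This is direct: differentiate $F$, substitute $u_s=\Delta u-a\mathrm{R}u$, integrate by parts with Dirichlet boundary data, and apply Cauchy-Schwarz to the cross-term $2u\nabla u\cdot\nabla\xi$ (with weight $\varepsilon=2$). The $|\nabla u|^2$ contributions cancel, leaving
\[
F'(s)\le\int u^2\!\left(\partial_s\xi+\tfrac12|\nabla\xi|^2\right)e^\xi\,dv-2a\int\mathrm{R}u^2 e^\xi\,dv,
\]
which is $\le 0$ since $\mathrm{R}\ge 0$ by Chen \cite{[Chen]}, $a\ge\frac14$, and $\xi$ satisfies the hypothesis.

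Next, I take the weight $\xi(x,s):=d^2(x,K)/(Ds+C)$ with $D>2$ and $C>0$. A direct computation using $|\nabla d(\cdot,K)|\le 1$ gives $\partial_s\xi+\frac12|\nabla\xi|^2\le(2-D)d^2/(Ds+C)^2\le 0$, so the maximum principle applies. Used between $\sigma\in(0,t)$ and $t$ and restricting the left-hand side to $\{d(x,K)\ge R\}$, it yields
\[
I_R(t)\le e^{-R^2/(Dt+C)}\int_\Omega u^2(x,\sigma)\,e^{d^2(x,K)/(D\sigma+C)}\,dv.
\]

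The main step is to bound the weighted integral on the right using only the hypothesis $I(s)\le e^{-\mu}/(8\pi s)^{n/2}$. Following Grigor'yan \cite{[Gr1]}, I decompose $\Omega$ into concentric annuli $A_k=\{k\rho\le d(x,K)<(k+1)\rho\}$ about $K$, bound the contribution of $A_k$ crudely by $e^{(k+1)^2\rho^2/(D\sigma+C)}I_{k\rho}(\sigma)$, and feed the resulting bound back at a geometrically shrinking sequence of times $\sigma,\sigma/\gamma,\sigma/\gamma^2,\ldots$ The scheme is calibrated so that each annular exponential factor is dominated by the Gaussian decay produced at the preceding step; for $D$ chosen large enough in terms of $\gamma$ the series telescopes geometrically and produces a bound $\le 2e^{-\mu}/(8\pi\sigma)^{n/2}$. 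Setting $\sigma=t/\gamma$ and choosing $C$ so small that $Dt+C\le D_0 t$ for some $D_0=D_0(\gamma)>2$ then completes the proof.

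The calibration in the iteration is the principal obstacle: the annulus width $\rho$, the time-shrinkage $\gamma$, and the parameter $D$ must be tuned so that the feedback contracts rather than diverges, and both the monotonicity requirement $D>2$ and the output constant $D_0>2$ emerge from this balance.
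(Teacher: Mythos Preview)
Your overall strategy --- Davies-type integral monotonicity followed by a Grigor'yan iteration --- is the right one, and your derivation of the maximum principle is fine. The gap is in the choice of weight and the resulting iteration scheme.

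You take $\xi(x,s)=d^2(x,K)/(Ds+C)$ with the \emph{untruncated} distance. That yields
\[
I_R(t)\le e^{-R^2/(Dt+C)}\int_\Omega u^2(x,\sigma)\,e^{d^2(x,K)/(D\sigma+C)}\,dv,
\]
so the right-hand side is the full $E_D$-type weighted integral. Bounding that integral is precisely the content of Proposition~\ref{pro1}, which in the paper is proved \emph{from} Lemma~\ref{lem1}, not before it. Your proposed cure --- annular decomposition plus feedback at times $\sigma/\gamma^j$ --- does not close: the $k$-th annulus contributes a factor $e^{(k+1)^2\rho^2/(D\sigma+C)}$, while the Gaussian decay you can feed back is only $e^{-(k\rho)^2/(D\sigma+C)}$ at the same scale, leaving $\sum_k e^{(2k+1)\rho^2/(D\sigma+C)}$, which diverges. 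Passing to a smaller time $\sigma/\gamma$ makes things worse, since the monotonicity goes the wrong way (the weighted integral increases as time decreases).

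The paper's key device is to use the \emph{truncated} distance $d(x):=(R-d(x,K))_+$ and the weight $\xi(x,t)=-d^2(x)/\bigl(2(s-t)\bigr)$. Because $d\equiv 0$ outside $B(K,R)$ and $d\ge R-r$ on $B(K,r)$, the monotonicity between times $\tau<t$ gives the clean two-term comparison
\[
I_R(t)\le I_r(\tau)+I(\tau)\,e^{-(R-r)^2/(2(t-\tau))}
\le I_r(\tau)+\frac{e^{-\mu}}{(8\pi\tau)^{n/2}}\,e^{-(R-r)^2/(2(t-\tau))},
\]
with no unbounded weighted integral anywhere. This recursion is then iterated along $R_k=(\tfrac12+\tfrac{1}{k+2})R$ and $t_k=t/\gamma^k$; the resulting series converges because $\gamma^{k+1}$ dominates the polynomial factors $(k+3)^4$. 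The truncated weight is the missing ingredient in your plan.
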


\begin{proof}[Proof of Lemma \ref{lem1}]
To prove the estimate, we first claim that $I_R(t)$ satisfies a comparison result:
\begin{equation}\label{claim1}
I_R(t)\leq I_r(\tau)+\frac{e^{-\mu}}{(8\pi\tau)^{\frac n2}}\exp\left(-\frac{(R-r)^2}{2(t-\tau)}\right)
\end{equation}
for $R>r$ and $t>\tau$. This claim follows by an integral monotonicity, which says that
the following function
\[
\int_\Omega u^2(x,t)e^{\xi(x,t)}dv,
\]
is non-increasing in $t\in(0,T)$. Here the function $\xi(x,t)$ is defined as
\[
\xi(x,t):=\frac{d^2(x)}{2(t-s)}
\]
for $s>t$, where $d(x)$ is a distance function defined by
\begin{equation*}
d(x)=\left\{ \begin{aligned}
&R-d(x,K)&&\mathrm{if}\,\,x\in B(K,R), \\
&0&&\mathrm{if}\,\,x\notin B(K,R). \\
\end{aligned}\right.
\end{equation*}
By Lemma 3.3 in \cite{[Ta]}, we know that $\int_\Omega u^2(x,t)e^{\xi(x,t)}dv$
is an almost everywhere differentiable function of $t$. So its monotonicity could
be obtained by the direct computation
\begin{equation*}
\begin{aligned}
\frac{d}{dt}\int_\Omega u^2(x,t)e^{\xi(x,t)}dv&=\int_\Omega u^2\xi_te^{\xi}dv+\int_\Omega 2uu_te^{\xi}dv\\
&\leq-\frac 12\int_\Omega u^2|\nabla\xi|^2e^{\xi}dv+\int_\Omega 2u(\Delta u-a\mathrm{R}u)e^{\xi}dv\\
&\leq-\frac 12\int_\Omega u^2|\nabla\xi|^2e^{\xi}dv-2\int_\Omega \nabla u\nabla(ue^{\xi})dv\\
&=-\frac 12\int_\Omega (u\nabla\xi+2\nabla u)^2e^\xi dv\\
&\leq0,
\end{aligned}
\end{equation*}
where we used the scalar curvature $\mathrm{R}\ge 0$ due to Chen \cite{[Chen]}
in the above inequality. We now continue to prove the claim
\eqref{claim1}. By the integral monotonicity, we have
\begin{equation}\label{ineq1}
\int_\Omega u^2(x,t)e^{-\frac{d^2(x)}{2(s-t)}}dv\le\int_\Omega u^2(x,\tau)e^{-\frac{d^2(x)}{2(s-\tau)}}dv
\end{equation}
for $s>t>\tau$. Notice that, by the definition of $d(x)$, on one hand,
\begin{equation*}
\begin{aligned}
\int_\Omega u^2(x,t)e^{-\frac{d^2(x)}{2(s-t)}}dv
&=\int_{\Omega\setminus B(K,R)} u^2(x,t)e^{-\frac{d^2(x)}{2(s-t)}}dv
+\int_{B(K,R)} u^2(x,t)e^{-\frac{d^2(x)}{2(s-t)}}dv\\
&\ge \int_{\Omega\setminus B(K,R)}u^2(x,t)dv\\
&=I_R(t);
\end{aligned}
\end{equation*}
on the other hand,
\begin{equation*}
\begin{aligned}
\int_\Omega u^2(x,\tau)e^{-\frac{d^2(x)}{2(s-\tau)}}dv
&=\int_{\Omega\setminus B(K,r)} u^2(x,\tau)e^{-\frac{d^2(x)}{2(s-\tau)}}dv
+\int_{B(K,r)} u^2(x,\tau)e^{-\frac{d^2(x)}{2(s-\tau)}}dv\\
&\le\int_{\Omega\setminus B(K,r)} u^2(x,\tau)dv
+\int_{B(K,r)} u^2(x,\tau)e^{-\frac{(R-r)^2}{2(s-\tau)}}dv\\
&=I_r(t)+\exp\left(-\frac{(R-r)^2}{2(s-\tau)}\right)\int_{B(K,r)} u^2(x,\tau)dv,
\end{aligned}
\end{equation*}
where $R>r$. Combining these estimates, \eqref{ineq1} becomes
\[
I_R(t)\le I_r(\tau)+\exp\left(-\frac{(R-r)^2}{2(s-\tau)}\right)\int_{B(K,r)} u^2(x,\tau)dv,
\]
and hence claim \eqref{claim1} follows by letting $s\to t+$ and the assumption of Proposition \ref{pro1}.

Then, we will apply \eqref{claim1} to prove Lemma \ref{lem1} by some iteration technique.
Choose $R_k$ and $t_k$ as follows:
\[
R_k=\left(\frac 12+\frac{1}{k+2}\right)R,\quad\quad t_k=\frac{t}{\gamma^k},
\]
where $\gamma>1$ is a fixed constant. We apply \eqref{claim1} to pairs $(R_k,t_k)$ and
$(R_{k+1},t_{k+1})$ and get the following iterated inequality
\[
I_{R_k}(t_k)\leq I_{R_{k+1}}(t_{k+1})+\frac{e^{-\mu}}{(8\pi t_{k+1})^{\frac
n2}}\exp\left(-\frac{(R_k-R_{k+1})^2}{2(t_k-t_{k+1})}\right).
\]
Sum up the above inequalities over all $k=0,1,2...$,
\[
I_R(t)\le \sum^{\infty}_{k=0}\frac{e^{-\mu}}{(8\pi t_{k+1})^{\frac
n2}}\exp\left[-\frac{(R_k-R_{k+1})^2}{2(t_k-t_{k+1})}\right],
\]
where we used the fact that
\[
\lim_{k\to\infty}I_{R_k}(t_k)=\int_{\Omega\setminus B(K,R/2)}u^2(x,0)dv=0
\]
by the Dirichlet boundary condition of $u$. Since
\[
t_{k+1}=\frac{t}{\gamma^{k+1}},\quad R_k-R_{k+1}\ge\frac{R}{(k+3)^2}\quad\text{and}\quad
t_k-t_{k+1}=\frac{\gamma-1}{\gamma^{k+1}}t,
\]
then we have
\[
I_R(t)\le\frac{e^{-\mu}}{(8\pi t)^{\frac n2}}\sum^{\infty}_{k=0}\exp\left[(k+1)\frac n2\ln
\gamma-\frac{\gamma^{k+1}}{(\gamma-1)(k+3)^4}\cdot\frac{R^2}{2t}\right].
\]
Notice that $\gamma^{k+1}$ grows in $k$ much faster than the
denominator $(k+3)^4$ whenever $\gamma>1$. So there exists a positive
number $m=m(\gamma)<1$ such that
\begin{equation}\label{num}
\frac{\gamma^{k+1}}{(\gamma-1)(k+3)^4}\ge m(k+2)
\end{equation}
for any $k\ge 0$. In particular, we can take
\[
m=m(\gamma):=\min\left\{\inf_{k\geq0}\frac{\gamma^{k+1}}{(\gamma-1)(k+2)(k+3)^4},\,\, \frac{3}{4}\right\}.
\]
Then,
\begin{equation*}
\begin{aligned}
I_R(t)&\le\frac{e^{-\mu}}{(8\pi t)^{\frac n2}}\sum^{\infty}_{k=0}\exp\left[(k+1)\frac n2\ln
\gamma-m(k+2)\cdot\frac{R^2}{2t}\right]\\
&=\frac{e^{-\mu}}{(8\pi t)^{\frac n2}}\exp\left(-m\frac{R^2}{2t}\right)\sum^{\infty}_{k=0}\exp\left[(k+1)\left(\frac
n2\ln \gamma-m\frac{R^2}{2t}\right)\right].
\end{aligned}
\end{equation*}

We shall further estimate the right-hand side of the above inequality.
When
\[
\frac n2\ln \gamma-m\frac{R^2}{2t}\leq-\ln 2,
\]
we have
\begin{equation*}
\begin{aligned}
I_R(t)&\le\frac{e^{-\mu}}{(8\pi t)^{\frac n2}}\exp\left(-m\frac{R^2}{2t}\right)\sum^{\infty}_{k=0}2^{-(k+1)}\\
&=\frac{e^{-\mu}}{(8\pi t)^{\frac n2}}\exp\left(-m\frac{R^2}{2t}\right).
\end{aligned}
\end{equation*}
When
\[
\frac n2\ln\gamma-m\frac{R^2}{2t}>-\ln2,
\]
we use the definitions of $I_R(t)$ and $I(t)$ and have that
\begin{equation*}
\begin{aligned}
I_R(t)\le I(t)&\le\frac{e^{-\mu}}{(8\pi t)^{\frac n2}}\\
&\le\frac{e^{-\mu}}{(8\pi t)^{\frac n2}}\exp\left(\frac n2\ln\gamma+\ln2-m\frac{R^2}{2t}\right)\\
&=\frac{2e^{-\mu}}{(8\pi \frac{t}{\gamma})^{\frac n2}}\exp\left(-m\frac{R^2}{2t}\right).
\end{aligned}
\end{equation*}
Therefore, in any case
\[
I_R(t)\le\frac{2e^{-\mu}}{(8\pi \frac{t}{\gamma})^{\frac n2}}\exp\left(-m\frac{R^2}{2t}\right),
\]
where $m=m(\gamma)<1$ and Lemma \ref{lem1} follows.
\end{proof}

Now we apply Lemma \ref{lem1} to give the proof of Proposition \ref{pro1}.
\begin{proof}[Proof of Proposition \ref{pro1}]
\emph{Step One}: we show that for $D\ge5D_0$ and for all $t>0$,
\[
E_D(t)\leq\frac{4e^{-\mu}}{(8\pi \frac{t}{\gamma})^{\frac n2}}.
\]
By the definition of $E_D(t)$, we split $E_D(t)$ into two terms:
\begin{equation*}
\begin{aligned}
E_D(t)&=\int_\Omega u^2\exp\left(\frac{d^2(x,K)}{Dt}\right)dv\\
&=\int_{\{d(x,K)\le R\}} u^2\exp\left(\frac{d^2(x,K)}{Dt}\right)dv
+\sum^{\infty}_{k=0}\int_{\{2^kR\le d(x,K)\le 2^{k+1}R\}} u^2\exp\left(\frac{d^2(x,K)}{Dt}\right)dv\\
&\le\int_{\Omega} u^2\exp\left(\frac{R^2}{Dt}\right)dv
+\sum^{\infty}_{k=0}\int_{\{2^kR\le d(x,K)\le 2^{k+1}R\}} u^2\exp\left(\frac{d^2(x,K)}{Dt}\right)dv\\
&\le\frac{e^{-\mu}}{(8\pi t)^{\frac n2}}\exp\left(\frac{R^2}{Dt}\right)
+\sum^{\infty}_{k=0}\int_{\{2^kR\le d(x,K)\le 2^{k+1}R\}} u^2\exp\left(\frac{d^2(x,K)}{Dt}\right)dv.
\end{aligned}
\end{equation*}
Since we have the condition \eqref{Iint}, by Lemma \ref{lem1}, the $k$-th factor in the above
sum term can be estimated by
\begin{equation*}
\begin{aligned}
\int_{\{2^kR\le d(x,K)\le 2^{k+1}R\}} u^2\exp\left(\frac{d^2(x,K)}{Dt}\right)dv
&\le\exp\left(\frac{4^{k+1}R^2}{Dt}\right)\int_{\Omega\setminus B(K,2^kR)} u^2dv\\
&\le\frac{2e^{-\mu}}{(8\pi \frac{t}{\gamma})^{\frac n2}}\exp\left(\frac{4^{k+1}R^2}{Dt}-\frac{4^kR^2}{D_0t}\right)\\
&\le\frac{2e^{-\mu}}{(8\pi \frac{t}{\gamma})^{\frac n2}}\exp\left(-\frac{4^kR^2}{Dt}\right),
\end{aligned}
\end{equation*}
where we used $D\ge5D_0$. Therefore,
\[
E_D(t)\le\frac{e^{-\mu}}{(8\pi t)^{\frac n2}}\exp\left(\frac{R^2}{Dt}\right)
+\frac{2e^{-\mu}}{(8\pi \frac{t}{\gamma})^{\frac n2}}\sum^{\infty}_{k=0}\exp\left(-\frac{4^kR^2}{Dt}\right)
\]
for any $R>0$. In particular, we choose $R^2=Dt\ln 2$ and get
\begin{equation*}
\begin{aligned}
E_D(t)&\le\frac{2e^{-\mu}}{(8\pi t)^{\frac n2}}
+\frac{2e^{-\mu}}{(8\pi \frac{t}{\gamma})^{\frac n2}}\sum^{\infty}_{k=0}2^{-4^k}\\
&\le\frac{4e^{-\mu}}{(8\pi \frac{t}{\gamma})^{\frac n2}}.
\end{aligned}
\end{equation*}

\emph{Step Two}:  In the rest, it suffices to prove the case $2<D<5D_0$.
Similar to the preceding discussion in Lemma \ref{lem1}, we also
claim that the integral
\[
\int_\Omega u^2(x,t)e^{\frac{d^2(x,K)}{2(t+s)}}dv
\]
is non-increasing in $t\in(0,\infty)$ for any $s>0$. Since this integral quantity
is almost everywhere differentiable with respect to $t$, setting
$\eta=\eta(x,t):=\frac{d^2(x,K)}{2(t+s)}$, then
\begin{equation*}
\begin{aligned}
\frac{d}{dt}\int_\Omega u^2(x,t)e^{\frac{d^2(x,K)}{2(t+s)}}dv
&=\int_\Omega u^2\eta_te^{\eta}dv+\int_\Omega 2uu_te^{\eta}dv\\
&\leq-\frac{1}{2}\int_\Omega u^2|\nabla\eta|^2 e^{\eta}dv+\int_\Omega 2u(\Delta u-a\mathrm{R}u)e^{\eta}dv\\
&\leq-\frac 12\int_\Omega u^2|\nabla\eta|^2e^{\eta}dv-2\int_\Omega \nabla u\nabla(ue^{\eta})dv\\
&=-\frac 12\int_\Omega (u\nabla\eta+2\nabla u)^2e^\eta dv\\
&\leq0,
\end{aligned}
\end{equation*}
where we also used $\mathrm{R}\ge 0$ on $(M,g,f)$, and the claim follows. Therefore, for any $\tau\in(0,t)$,
\[
\int_\Omega u^2(x,t)e^{\frac{d^2(x,K)}{2(t+s)}}dv\le\int_\Omega u^2(x,\tau)e^{\frac{d^2(x,K)}{2(\tau+s)}}dv.
\]
Given $2<D<5D_0$ and $t$, we let $s=\frac{D-2}{2}t$ and $\tau=\frac{D-2}{5D_0-2}t<t$ in the above inequality, which thus rewrites as
\[
E_D(t)\le E_{5D_0}(\tau)
\]
for any $\tau\in(0,t)$. By step one, we have proven
\[
E_{5D_0}(\tau)\le \frac{4e^{-\mu}}{(8\pi \frac{\tau}{\gamma})^{\frac n2}}.
\]
Hence
\[
E_D(t)\le \frac{4e^{-\mu}}{\left(\frac{8\pi(D-2)}{5D_0-2}\cdot\frac{t}{\gamma}\right)^{\frac n2}}.
\]
This implies Proposition \ref{pro1} by letting $\delta=\delta(D,\gamma)=\frac{D-2}{5D_0-2}\gamma^{-1}$.
\end{proof}

Next, we will apply Proposition \ref{pro1} to prove Theorem \ref{Mainup2}.
\begin{proof}[Proof of Theorem \ref{Mainup2}]
By the semigroup property of the Schr\"odinger heat kernel, we have
\[
H^{\mathrm{R}}(x,y,t)=\int_MH^{\mathrm{R}}(x,z,t/2)H^{\mathrm{R}}(z,y,t/2)dv(z).
\]
Then by the triangle inequality $d^2(x,y)\le2(d^2(x,z)+d^2(y,z))$, for any positive constant $D$,
we furthermore have
\begin{equation*}
\begin{aligned}
H^{\mathrm{R}}(x,y,t)&\le\int_MH^{\mathrm{R}}(x,z,t/2)e^{\frac{d^2(x,z)}{Dt}}H^{\mathrm{R}}(z,y,t/2)
e^{\frac{d^2(y,z)}{Dt}}e^{-\frac{d^2(x,y)}{2Dt}}dv(z)\\
&\le e^{-\frac{d^2(x,y)}{2Dt}}\left[\int_M\left(H^{\mathrm{R}}(x,z,t/2)e^{\frac{d^2(x,z)}{Dt}}\right)^2dv(z)\right]^{1/2}\\
&\quad\times \left[\int_M\left(H^{\mathrm{R}}(y,z,t/2)e^{\frac{d^2(y,z)}{Dt}}\right)^2dv(z)\right]^{1/2}.
\end{aligned}
\end{equation*}
If we set
\[
E_D(x,t):=\int_M (H^{\mathrm{R}}(x,z,t))^2e^{\frac{d^2(x,z)}{Dt}}dv(z),
\]
then we have a simple expression
\begin{equation}\label{inteuphet}
H^{\mathrm{R}}(x,y,t)\le\sqrt{E_D(x,t/2)E_D(y,t/2)}\exp\left(-\frac{d^2(x,y)}{2Dt}\right),
\end{equation}
which always holds on $(M,g,f)$.

We take an increasing sequence of pre-compact regions $\Omega_k\subset M$, $k\in \mathbb{N}$,
exhausting $M$, and in each $\Omega_k$ we construct the Dirichlet heat kernel
$H_{\Omega_k}^{\mathrm{R}}(x,y,t)$ for the heat-type equation $(\partial_t+L)u=0$.
Then by the maximum principle, we have
\[
0< H_{\Omega_k}^{\mathrm{R}}\le H_{\Omega_{k+1}}^{\mathrm{R}}\le H^{\mathrm{R}}.
\]
Now we will apply Proposition \ref{pro1} to estimate $E_{D,{\Omega_k}}(x,t)$, where
\[
E_{D,{\Omega_k}}(x,t):=\int_{\Omega_k} (H^{\mathrm{R}}(x,z,t))^2e^{\frac{d^2(x,z)}{Dt}}dv(z).
\]
Let $u(z,t)=H_{\Omega_k}^{\mathrm{R}}(x,z,t)$ be a solution to the Dirichlet problem
for the equation $(\partial_t+L)u=0$ in $\Omega_k\times(0,T)$ and let $K=\{x\}\subset\Omega$.
We observe that
\begin{equation*}
\begin{aligned}
I(t)&=\int_{\Omega_k} u^2(z,t)dv(z)\\
&=\int_{\Omega_k}H_{\Omega_k}^{\mathrm{R}}(x,z,t)H_{\Omega_k}^{\mathrm{R}}(z,x,t)dv(z)\\
&=H_{\Omega_k}^{\mathrm{R}}(x,x,2t)\\
&\le\frac{e^{-\mu}}{(8\pi t)^{\frac n2}},
\end{aligned}
\end{equation*}
where we used \eqref{CHUP} in the last inequality.
So we can apply Proposition \ref{pro1} to get that for any $\gamma>1$, $D>2$ and for all
$t\in(0,T)$, we have
\begin{equation*}
\begin{aligned}
E_{D,{\Omega_k}}(x,t)&=\int_{\Omega_k} u^2(z,t)e^{\frac{d^2(x,z)}{Dt}}dv(z)\\
&\le \frac{4e^{-\mu}}{(8\pi\delta t)^{\frac n2}}
\end{aligned}
\end{equation*}
for some $\delta=\delta(D,\gamma)>0$. Indeed we can take
$\delta=\delta(D,\gamma)=\frac{D-2}{5D_0-2}\gamma^{-1}$, where $2<D<\frac{10}{m(\gamma)}$
and $m(\gamma)<1$ is determined by \eqref{num}. Since the number $\delta$ does not
depend on $k$, letting $k\to\infty$ in the above inequality, we then have
\[
E_D(x,t/2)\le \frac{4e^{-\mu}}{(4\pi\delta t)^{\frac n2}}.
\]
By a similar argument to the point $y\in M$, we have
\[
E_D(y,t/2)\le \frac{4e^{-\mu}}{(4\pi\delta t)^{\frac n2}}.
\]
Substituting the above two estimates into \eqref{inteuphet} completes the proof of
Theorem \ref{Mainup2}.
\end{proof}


\section{Green's function estimate}\label{sec5}
In this section, we will apply Schr\"odinger heat kernel estimates to obtain
Green's function estimates for the Schr\"odinger operator on complete
gradient shrinking Ricci solitons; see Theorem \ref{Green}. Recall that
Malgrange \cite{[Ma]} proved that any Riemannian manifold admits a Green's
function
\[
G(x,y):=\int^\infty_0H(x,y,t)dt
\]
if the integral on the right-hand side converges, where $H(x,y,t)$ denotes the heat kernel
of the Laplace operator. Varopoulos \cite{[Varo]} showed that any complete manifold $(M,g)$
has a positive Green's function only if
\begin{equation} \label{Greennecessary}
\int_1^{\infty}\frac{t}{V_p(t)}dt<\infty
\end{equation}
for some point $p\in M$, where $V_p(t)$ denotes the volume of the geodesic ball $B_t(p)$
with radius $t$ and center at $p$.
When the Ricci curvature of manifolds is nonnegative, Varopoulos \cite{[Varo]} and Li-Yau
\cite{[LY]} proved \eqref{Greennecessary}
is a sufficient and necessary condition for the existence of positive Green's function.

On an $n$-dimensional complete gradient shrinking Ricci soliton $(M,g,f)$, letting
$H^{\mathrm{R}}(x,y,t)$ be the Schr\"odinger heat kernel of the operator
$L=-\Delta+a\mathrm{R}$ with $a\geq \frac 14$, the Green's function of $L$ is defined by
\[
G^{\mathrm{R}}(x,y)=\int^\infty_0H^{\mathrm{R}}(x,y,t)dt
\]
if the integral on the right-hand side converges. By the Schr\"odinger heat kernel
 estimates, it is easy to get an upper estimate for
the Green's function of $L$, which is similar to Li-Yau estimate \cite{[LY]} of the
classical Green's function.

\begin{proof}[Proof of Theorem \ref{Green}]
Using the definition of $G^{\mathrm{R}}(x,y)$ and Theorem \ref{Mainup2}, we have
\begin{equation*}
\begin{aligned}
G^{\mathrm{R}}(x,y)&=\int^\infty_0H^{\mathrm{R}}(x,y,t)dt\\
&=\int^{r^2}_0H^{\mathrm{R}}(x,y,t)dt+\int^\infty_{r^2}H^{\mathrm{R}}(x,y,t)dt\\
&\le\int^{r^2}_0H^{\mathrm{R}}(x,y,t)dt+\frac{Ae^{-\mu}}{(4\pi)^{n/2}}\int^\infty_{r^2}t^{-\frac n2}dt\\
&\le\frac{Ae^{-\mu}}{(4\pi)^{n/2}}\left[\int^{r^2}_0t^{-\frac
n2}\exp\left(\frac{-r^2}{ct}\right)dt+\int^\infty_{r^2}t^{-\frac n2}dt\right],
\end{aligned}
\end{equation*}
where $r=d(x,y)$ is the distance function from $x$ to $y$, $c>4$ is a constant,
and $A=A(n,c)$ is determined by \eqref{low2}. Notice that for the first term of the right-hand side
of the above inequality, letting $s=r^4/t$ and observing that $r^2<s<\infty$,
we get
\begin{equation*}
\begin{aligned}
\int^{r^2}_0t^{-\frac n2}\exp\left(\frac{-r^2}{ct}\right)dt
&=\int^\infty_{r^2}\left(\frac{r^4}{s}\right)^{-\frac n2}
\exp\left(\frac{-s}{cr^2}\right)\frac{r^4}{s^2}ds\\
&=\int^\infty_{r^2}s^{-\frac n2}\left(\frac{s}{r^2}\right)^{n-2}
\exp\left(\frac{-s}{cr^2}\right)ds\\
&\le c(n)\int^\infty_{r^2}s^{-\frac n2}ds,
\end{aligned}
\end{equation*}
where in the last line we have used the fact that the function $l^{n-2}e^{-l/c}$,
$l\in [1,\infty)$, is bounded from above. Therefore, for $n\ge 3$
\begin{equation*}
\begin{aligned}
G^{\mathrm{R}}(x,y)
&\le\frac{C(n)Ae^{-\mu}}{(4\pi)^{n/2}}\int^\infty_{r^2}t^{-\frac n2}dt\\
&=\frac{2C(n)Ae^{-\mu}}{(n-2)(4\pi)^{n/2}r^{n-2}}.
\end{aligned}
\end{equation*}
and the result follows.
\end{proof}
\section{eigenvalue estimate}\label{sec6}
In this section, we will apply Gaussian upper bounds on the Schr\"odinger heat kernel
$H^{\mathrm{R}}(x,y,t)$ on compact gradient shrinking Ricci solitons to get the
eigenvalue estimates for the Schr\"odinger operator $L$; see Theorem \ref{eigen}.
The proof is essentially parallel to the Li-Yau's Laplace situation on manifolds
\cite{[LY]}.

\begin{proof}[Proof of Theorem \ref{eigen}]
By Theorem \ref{Mainup}, the Schr\"odinger heat kernel of the operator $L$ has an upper bound
\begin{equation}\label{upperh}
H^{\mathrm{R}}(x,y,t)\le\frac{e^{-\mu}}{(4\pi t)^{\frac n2}}.
\end{equation}
Notice that the Schr\"odinger heat kernel can be written as
\[
H^{\mathrm{R}}(x,y,t)=\sum^\infty_{i=1}e^{-\lambda_it}\varphi_i(x)\varphi_i(y),
\]
where $\varphi_i$ is the eigenfunction corresponding to the eigenvalue $\lambda_i$
with $\|\varphi_i\|_{L^2}=1$. Integrating both sides of \eqref{upperh}, we have
\begin{equation*}
\begin{aligned}
\sum^\infty_{i=1}e^{-\lambda_it}\leq \frac{e^{-\mu}}{(4\pi t)^{\frac n2}}V(M),
\end{aligned}
\end{equation*}
where $V(M)$ is the volume of the manifold $M$. Hence,
\[
ke^{-\lambda_kt}\leq \frac{e^{-\mu}}{(4\pi t)^{\frac n2}}V(M),
\]
which further implies
\begin{equation}\label{heaest}
\frac{ke^{\mu}}{V(M)}\le e^{\lambda_kt}(4\pi t)^{-\frac n2}
\end{equation}
for any $t>0$. It is easy to see that the function $e^{\lambda_kt}(4\pi t)^{-\frac n2}$ takes
its minimum at
\[
t_0=\frac{n}{2\lambda_k}.
\]
Plugging this point into \eqref{heaest} gives the lower bound for $\lambda_k$.
\end{proof}

\vspace{.1in}

\textbf{Acknowledgements}.
The author thanks Professor Qi S. Zhang for helpful suggestions.
The author also thanks the referee for making valuable comments and suggestions and pointing
out many errors which helped to improve the presentation of this work. This work was
partially supported by NSFS (17ZR1412800) and NSFC (11671141).

\vspace{2em}

\textbf{Declarations}

\

\textbf{Conflict of interest} The author declares that there is no conflict of
interest.

\vspace{.1in}

\textbf{Code availability} Not applicable.

\vspace{.1in}

\textbf{Data availability statement} Data sharing not applicable to this article as
no datasets were generated or analysed during the current study.

\bibliographystyle{amsplain}

\begin{thebibliography}{30}
\bibitem{[AIL]} K. Akutagawa, M. Ishida, and C. LeBrun, Perelman's invariant, Ricci flow, and the Yamabe
invariants of smooth manifolds, Arch. Math. 88 (2007), 71-76.

\bibitem{[Cao]} H. D. Cao, Recent progress on Ricci solitons. Recent Advances in Geometric Analysis.
In: Y-I. Lee, C-S. Lin, M-P. Tsui, (eds.) Advanced Lectures in Mathematics (ALM), vol. 11, pp. 1-8.
International Press, Somerville, 2010.

\bibitem{[Car]} G. Carron, Geometric inequalities for manifolds with Ricci curvature in the Kato class, Ann. Inst. Fourier (Grenoble) 69 (2020), 3095-3167.

\bibitem{[Choweta]} B. Chow, S.-C. Chu, D. Glickenstein, C. Guenther, J. Isenberg, T.
Ivey, D. Knopf, P. Lu, F. Luo, L. Ni, The Ricci flow: techniques and applications.
Part III: geometric-analytic aspects, Mathematical Surveys and Monographs, vol. 163, AMS,  2010.

\bibitem{[Choweta2]} B. Chow, S.-C. Chu, D. Glickenstein, C. Guenther, J. Isenberg, T. Ivey, D. Knopf, P. Lu, F. Luo,
L. Ni, The Ricci flow: techniques and applications, part IV: long-time solutions and related topics,
Mathematical surveys and Monographs, vol. 206, AMS,  2015.

\bibitem{[CaNi]} J. Carrillo, L. Ni, Sharp logarithmic Sobolev inequalities on gradient solitons
and applications, Comm. Anal. Geom. 17 (2009), 721-753.

\bibitem{[Ch]} I. Chavel, Eigenvalues in Riemannian geometry, volume 115 of Pure and Applied Mathematics. Academic Press, Inc., Orlando, FL, 1984. Including a chapter by Burton Randol, With an appendix by Jozef Dodziuk.

\bibitem{[Chen]} B.-L. Chen, Strong uniqueness of the Ricci flow, J. Diff. Geom. 82 (2009), 363-382.

\bibitem{[CGT]} J. Cheeger, M. Gromov, M. Taylor, Finite propagation speed, kernel estimates for
functions of the Laplace operator, and the geometry of complete Riemannian manifolds, J. Differ. Geom.
17 (1982), 15-53.

\bibitem{[CLY]} S.-Y. Cheng, P. Li, S.-T. Yau, On the upper estimate of the heat kernel of a complete
Riemannian manifold, Amer. J. Math. 103 (1981), 1021-1063.

\bibitem{[Da]} E. B. Davies, Heat kernels and spectral theory, Cambridge Tracts in Mathematics, vol. 92,
Cambridge University Press, 1989.

\bibitem{[Da2]} E. B. Davies, Heat kernel bounds, conservation of probability and the Feller property,
J. Anal. Math. 58 (1992), 99-119.

\bibitem{[DS]} E. B. Davies, B. Simon, $L^p$ Norms of non-critical Schr\"odinger semigroups, J. Funct. Anal.
 102 (1991), 95-115.

\bibitem{[DS2]} E. B. Davies, B. Simon, Ultracontractivity and heat kernels for Schr\"odinger operators and
Dirichlet Laplacians, J. Funct. Anal. 59 (1984), 335-395.

\bibitem{[Gr]} A. Grigor'yan, The heat equation on noncompact Riemannian manifolds (Russian), Math. Sb. 182 (1991), 55-87. (translation in Math. USSR Sb. 72 (1992), 47-77)

\bibitem{[Gr1]} A. Grigor'yan, Gaussian upper bounds for the heat kernel on arbitrary manifolds, J.
Diff. Geom.  45 (1997), 33-52.

\bibitem{[Gr2]} A. Grigor'yan, Heat kernels on weighted manifolds and applications. The ubiquitous heat kernel.
In: Contemp. Math., vol. 398, 93-191. Am. Math. Soc., Providence, 2006.

\bibitem{[Ham]} R. Hamilton, The Formation of Singularities in the Ricci Flow. Surveys in Differential Geometry,
International Press, Boston, vol. 2, (1995), 7-136.

\bibitem{[HaMu]} R. Haslhofer, R. M$\mathrm{\ddot{u}}$ller, A compactness theorem for complete Ricci
shrinkers, Geom. Funct. Anal. 21 (2011), 1091-1116.

\bibitem{[Lip]}P. Li, Geometric analysis, Cambridge Studies in Advanced Mathematics, vol. 134,
Cambridge University Press, Cambridge, 2012.

\bibitem{[LYa]}P. Li, S.-T. Yau, On the Schrodinger equation and the eigenvalue problem, Comm. Math. Phys. 88 (1983), 309-318.

\bibitem{[LY]}P. Li, S.-T. Yau, On the parabolic kernel of the Schrodinger operator, Acta Math. 156 (1986), 153-201.

\bibitem{[LiWa]} Y. Li, B. Wang, Heat kernel on Ricci shrinkers, Calc. Var. PDEs, 59 (2020) Art. 194.

\bibitem{[Ma]} M. Malgrange, Existence et approximation des solutions der \'equations aux d\'eriv\'ees partielles et des
\'equations de convolution, Ann. Inst. Fourier 6 (1955), 271-355.

\bibitem{[Pi]} Y. Pinchover, Large time behavior of the heat kernel, J. Funct. Anal. 206 (2004), 191-209.

\bibitem{[Pig]} S. Pigola, M. Rimoldi, A.G. Setti, Remarks on non-compact gradient Ricci solitons,
Math. Z. 268 (2011), 777-790.

\bibitem{[Pe]} G. Perelman, The entropy formula for the Ricci flow and its geometric applications,
(2002), arXiv:math.DG/0211159.

\bibitem{[Pe2]} G. Perelman, Ricci flow with surgery on three-manifolds, (2003), arXiv:math.DG/0303109.

\bibitem{[Pe3]} G. Perelman, Finite extinction time for the solutions to the Ricci flow on certain three-manifolds,
 (2003), arXiv:math.DG/0307245.

\bibitem{[Sal]} L. Saloff-Coste, A note on Poincar\'e, Sobolev, and Harnack inequalities, Int. Math. Res. Not.
1992(2) (1992), 27-38.

\bibitem{[Sa]} L. Saloff-Coste, Uniformly elliptic operators on Riemannian manifolds, J. Diff. Geom. 36 (1992), 417-450.

\bibitem{[WuW]} J.-Y. Wu, P. Wu, Heat kernel on smooth metric measure spaces and applications, Math. Ann.
365 (2016), 309-344.

\bibitem{[Ta]} L. Tamanini, From Harnack inequality to heat kernel estimates on metric
measure spaces and applications, arXiv:1907.07163.

\bibitem{[Varo]} N. Varopoulos, The Poisson kernel on positively curved manifolds, J. Funct. Anal. 44 (1981), 359-380.

\bibitem{[Varo2]} N. Varopoulos, Hardy-Littlewood theory for semigroups, J. Funct. Anal. 63 (1985),  240-260.

\bibitem{[Zh1]} Qi S. Zhang, An optimal parabolic estimate and its applications in prescribing scalar curvature on some open
    manifolds with $\mathrm{Ricci}\ge 0$, Math. Ann. 316 (2000), 703-731.

\bibitem{[Zh2]} Qi S. Zhang, Large time behavior of Schr\"odinger heat kernels and applications, Comm. Math. Phys. 210 (2000), 371-398.

\bibitem{[Zh3]} Qi S. Zhang, Global bounds of Schr\"odinger heat kernels with negative potentials,
J. Funct. Anal. 182 (2001), 344-370.

\bibitem{[Zh4]} Qi S. Zhang, A sharp comparison result concerning Schr\"odinger heat kernels, Bull. London Math. Soc. 35 (2003), 461-472.

\bibitem{[Zh5]} Q. S. Zhang, Sobolev inequalities, heat kernels under Ricci flow, and the Poincar\'e conjecture, CRC Press, Boca Raton, FL, 2011.
\end{thebibliography}

\end{document}